\theoremstyle{definition}
\newtheorem{theorem}{Theorem}[section]
\newtheorem{definition}[theorem]{Definition}
\newtheorem{lemma}[theorem]{Lemma}
\numberwithin{equation}{section}
\newcommand{\myr}{r}
\newcommand{\myb}{b}
\newcommand{\myc}{c}
\newcommand{\mya}{a}
\newcommand{\Y}{Q}
\newcommand{\real}{\mathbb R}
\newcommand{\segment}{\phi}
\newcommand{\Segment}{\Phi}
\newcommand{\uds}{U}
\title{Differentiability inside sets with upper Minkowski dimension one.}
\author{Michael Dymond}
\address{School of Mathematics, University of Birmingham,
Birmingham
B15 2TT
UK}
\email{dymondm@maths.bham.ac.uk}
\author{Olga Maleva}
\address{School of Mathematics, University of Birmingham,
Birmingham
B15 2TT
UK}
\email{O.Maleva@bham.ac.uk}
\begin{document}
\begin{abstract} 
We show that every finite-dimensional Euclidean space contains compact universal differentiability sets of upper Minkowski dimension one. In other words, there 
are compact sets $S$ of upper Minkowski dimension one such that every Lipschitz function defined on the whole space is differentiable inside $S$. Such sets are constructed explicitly.
\end{abstract}
\maketitle
\pagestyle{myheadings}
\markboth{Differentiability inside sets with upper Minkowski dimension one}{Michael Dymond and Olga Maleva}
 
\section{Introduction}\label{sec:1}
Lipschitz functions on Banach spaces have somewhat strong differentiability properties. Rademacher's Theorem is a classical result and states that a Lipschitz function on a Euclidean space is differentiable almost everywhere with respect to the Lebesgue measure \cite[p.~100]{mattila99}. A more recent theorem, due to Preiss and published in 1990, asserts that every Lipschitz function on a Banach space $X$, with norm differentiable away from the origin, is differentiable on a dense subset of $X$ \cite{preiss90}.  

For Lipschitz functions on $\mathbb{R}$, a converse statement to Rademacher's Theorem is true; for every subset $N$ of $\mathbb{R}$ with Lebesgue measure zero, there exists a Lipschitz function $f=f_{N}$ that is nowhere differentiable on $N$. This fact is proved in \cite{zahorski46}, where a full characterisation of the possible sets of non-differentiability of a Lipschitz function on $\mathbb{R}$ is given. 

The converse statement to Rademacher's Theorem fails in higher dimensions. In \cite{preiss90}, it is proved that there exists a subset of $\mathbb{R}^{2}$, with Lebesgue measure zero, which contains a point of differentiability of every Lipschitz function on $\mathbb{R}^{2}$. Sets containing a point of differentiability of every Lipschitz function are said to have the universal differentiability property and are called universal differentiability sets. This terminology was introduced by Dor\'{e} and Maleva and first appeared in \cite{doremaleva2}.

As alluded to above, subsets of $\mathbb{R}^{d}$ can be distinguished according to their Lebesgue measure. The Lebesgue null, universal differentiability set given in \cite{preiss90} contains every line segment between pairs of points with rational co-ordinates. In some sense, this set is still rather large; the closure of this set is the whole of $\mathbb{R}^{2}$. Recent work of Dor\'{e} and Maleva has uncovered much smaller sets in $\mathbb{R}^{d}$ with the universal differentiability property. A compact universal differentiability set with Lebesgue measure zero is constructed in \cite{doremaleva1}. 

In order to detect smaller universal differentiability sets, we must appeal to some other notion of size, rather than the Lebesgue measure. In the theory of fractal geometry, the size of a set is often ascertained by 
dimension. The Hausdorff dimension, based on a construction of Carath\'{e}odory is the oldest and perhaps the most important example of a dimension \cite[p. 27]{falconer03}. It is defined for all subsets of a Euclidean space according to the Hausdorff measure; see \cite{mattila99} or \cite{falconer03} for a complete definition. In \cite{doremaleva2}, a compact universal differentiability set with Hausdorff dimension one is constructed. This result is optimal in the sense that any universal differentiability set in a Euclidean space must have Hausdorff dimension at least one \cite[Lemma~1.5]{doremaleva2}.
Finally, it is proved in \cite{doremaleva3} that every non-zero Banach space with separable dual contains a closed and bounded universal differentiability set of Hausdorff dimension one. This is a generalisation of the main result of \cite{doremaleva2} for infinite dimensional Banach spaces.

The Minkowski dimensions of a bounded subset of $\mathbb{R}^{d}$ are closely related to the Hausdorff dimension. Whilst the Hausdorff dimension of a set is based on coverings by sets of arbitrarily small diameter, the Minkowski dimensions are defined similarly according to coverings by sets of the same small diameter. For this reason, the Minkowski dimension is often referred to as the box-counting dimension \cite[p. 41]{falconer03}. The definition below follows \cite[p.~76-77]{mattila99}. 
\begin{definition}\label{def:minkowskidimension}
Given a bounded subset $A$ of $\mathbb{R}^{d}$ and $\epsilon>0$, we define $N_{\epsilon}(A)$ to be the minimal number of balls of radius $\epsilon$ required to cover $A$. That is, $N_{\epsilon}(A)$ is the smallest integer $n$ for which there exists balls $B_{1},\ldots,B_{n}\subseteq\mathbb{R}^{d}$, each of radius $\epsilon$, such that $A\subseteq\cup_{i}B_{i}$.

The lower Minkowski dimension of $A$ is then defined by
\begin{equation}\label{eq:lowerminkowskidimension}
\underline{\dim}_{M}(A)=\inf\left\{s>0:\liminf_{\epsilon\to{0}}N_{\epsilon}(A)\epsilon^{s}=0\right\},
\end{equation}
and the upper Minkowski dimension of $A$ is given by
\begin{equation}\label{eq:upperminkowksidimension}
\overline{\dim}_{M}(A)=\inf\left\{s>0:\limsup_{\epsilon\to{0}}N_{\epsilon}(A)\epsilon^{s}=0\right\}.
\end{equation}
\end{definition}
Writing $\dim_{H}$ for the Hausdorff dimension, it is readily verified that $\dim_{H}(A)\leq\underline{\dim}_{M}(A)\leq\overline{\dim}_{M}(A)$ for all bounded $A\subseteq\mathbb{R}^{d}$. The Hausdorff dimension and Minkowski dimensions can be very different: For example, a countable dense subset of a ball in $\mathbb{R}^{d}$ has Hausdorff dimension $0$ whilst having the maximum upper and lower Minkowski dimension $d$. A construction of a set having lower Minkowski dimension strictly less than its upper Minkowski dimension is given in \cite[p. 77]{mattila99}. It is worth noting that the Minkowski dimension behaves nicely with respect to closures; we have that $\overline{\dim}_{M}(A)=\overline{\dim}_{M}(\mbox{Clos}(A))$ and $\underline{\dim}_{M}(A)=\underline{\dim}_{M}(\mbox{Clos}(A))$. 

In the present paper, we verify the existence of a compact universal differentiability set with upper and lower Minkowski dimension one in $\mathbb{R}^{d}$ for $d\geq{2}$. Such a set is constructed explicitly. This is an improvement on the result of \cite{doremaleva2}, where a compact universal differentiability set of Hausdorff dimension one is given. The construction in \cite{doremaleva2} involves considering a $G_{\delta}$ set $O$ 
of Hausdorff dimension one,
containing all line segments between points belonging to a countable dense subset $R$ of the unit ball in $\mathbb{R}^{d}$ (and hence the Minkowski dimension of $O$ is equal to $d$). The set $O$ can be expressed as $O=\bigcap_{k=1}^{\infty}O_{k}$, where each $O_{k}$ is an open subset of $\mathbb{R}^{d}$ and $O_{k+1}\subseteq{O_{k}}$. For each $k\geq{1}$, a set $R_{k}$ is defined consisting of a finite union of line segments between points from $R$. Since $R_{k}$ is then a closed subset of $O$, it is possible to choose $w_{k}>0$ such that $\overline{B}_{w_{k}}(R_{k})\subseteq{O_{k}}$. The final sets $\uds_{\lambda}$ are then defined by 
\begin{equation*}
\uds_{\lambda}=\bigcap_{k=1}^{\infty}\bigcup_{k\leq{n}\leq{(1+\lambda)k}}\overline{B}_{\lambda{w_{k}}}(R_{k})
\end{equation*}
for $\lambda\in[0,1]$. Observe that, for each $k\geq{1}$ and $\lambda\in[0,1]$ the closed set $\bigcup_{k\leq{n}\leq{(1+\lambda)k}}\overline{B}_{\lambda{w_{k}}}(R_{k})$ is contained in $O_{k}$. Consequently, 
$\dim_H(\uds_\lambda)\le\dim_H(O)=1$.
There is no non-trivial upper bound for the Minkowski dimensions of the sets $\uds_{\lambda}$ constructed in \cite{doremaleva2}.
For constructing a universal differentiability set with upper or lower Minkowski dimension one, the approach of \cite{doremaleva2} fails because the set $O$ has the maximum upper and lower Minkowksi dimension, $d$.

To get a set of lower Minkowski dimension one it would be enough to control the number of
$\delta$-cubes (this will refer to  a cube with side equal $2\delta$) for a specific sequence $\delta_n\searrow 0$. Assume $p>1$ is a fixed number and we want to make sure that the set to be constructed has lower Minkowski dimension less than $p$.
Imagine that we have reached the $n$th step of the construction where we require that $C_n$ is an upper estimate for a number of $\delta_n$-cubes needed to cover
the final set $W$, and $C_n\delta_n^p<1$. The idea for the next step 
is to divide each $\delta_n$-cube by a $K_n\times\dots\times K_n$ grid into smaller $\delta_{n+1}={\delta_n}/{K_n}$-cubes.
If $K_n$ is big enough, then as $\delta_n^p/\delta_{n+1}^p=K_n^p$, we are free to choose inside the given $\delta_n$-cube any number of $\delta_{n+1}$-boxes up
to $K_n^p$. This could, for example, be $K_n(\log K_n)^{M_n}\ll K_n^p$ for any fixed $p>1$, see inequality
\eqref{eq:pf-cond-boxes}; $K_n=\Y^{s_n}$
and $|\mathcal E_n|\le s_n^{2d}$, 
$M_n\ll\frac{s_n}{\log s_n}$. 
We then have that the product of the 
total number of $\delta_{n+1}$-cubes needed to cover $W$ by $\delta_{n+1}^p$ is bounded by $1$ from above as 
well. Since this is satisfied for all $n$, we conclude that $\underline{\dim}_M(W)\le p$. 
Since this is true for every $p>1$, 
we obtain a set of lower Minkowski dimension $1$.

Getting $\overline{\dim}_M(W)\le1$ is less clear. As $n$ grows, the number $K_n$ has to tend to infinity or otherwise we would get many points of porosity inside $W$,
see below for the definition and discussion of porosity. 
In order to prove that $\overline{\dim}_M(W)\le p$ we should be able to show that there exists $\delta_0>0$ such that for every $\delta\in(0,\delta_0)$ the set $W$
can be covered by a controlled number $N_\delta$ of $\delta$-boxes. In other words, $N_\delta\delta^p$ should be bounded for all $\delta$ below certain threshold. Choosing $n$ such that
$\delta_{n+1}<\delta\le\delta_n$ gives 
$N_\delta\delta^p\le N_{\delta_{n+1}}\delta_n^p= N_{\delta_{n+1}}\delta_{n+1}^pK_n^p$,
and the factor $K_n^p\to\infty$ makes it impossible to have a constant upper estimate 
for $N_\delta\delta^p$. The idea here is that we need to leave a ``gap'' for an unbounded sequence in the estimate for $N_{\delta_n}\delta_n^p$ and to make sure that $K_n^p$ fits inside that gap. The realisation of that gap is the inequality
\eqref{eq:claim}.

To finish the introduction, let us briefly explain why we should be concerned about
porosity points. A point $x\in W$ is called its porosity point if there exists 
$\lambda>0$ such that for any  $r>0$ there is a point $y\in B(x,r)$ such that
$B(y,\lambda\|y-x\|)\cap W=\varnothing$. If $x$ is a porosity point of $W$ then 
the distance to $W$,
$f(\cdot)=\mathrm{dist}(\cdot,W)$, is a $1$-Lipschitz function not differentiable at $x$.
Since our aim is to construct a universal differentiability set,
we 
try to avoid as much as possible constructions that lead to many porosity points inside the set we construct.
More information about porous and $\sigma$-porous sets can be found in the survey \cite{zajicek75}, and further discussion of relations between problems about
differentiability of Lipschitz functions and the theory of porous and
 $\sigma$-porous sets is presented in the recent book \cite{lindenstrausstiserpreiss12}. 

\bigskip

{\textbf{\large{Structure of the paper}}}

\medskip

In Section~\ref{sec:2} we establish a criterion for the universal differentiability property based on the results of \cite{doremaleva1} and \cite{doremaleva3}. Section~\ref{sec:3} is devoted to the construction of the new set 
of upper Minkowski dimension one.  Finally, in Section~\ref{sec:4}, we apply the result established in Section~\ref{sec:2} to prove that our set has the universal differentiability property. 
\section{Differentiability}\label{sec:2}
In this section, we prove a criterion for the universal differentiability property. We begin by defining the two key notions of differentiability of a real valued function $f$ on a Banach space $X$, according to \cite[p.~83]{benyaminilindenstrauss00}.

\begin{definition}
A function $f:X\to\mathbb{R}$ is said to be 
Fr\'echet differentiable at a point $x\in{X}$ if
the limit
\begin{equation*}
f'(x,e)=\lim_{t\to{0}}\frac{f(x+te)-f(x)}{t}
\end{equation*}
exists uniformly in $e\in \overline B(0,1)$
and is a bounded linear map. 
\end{definition}
Next, we formalise what it means for a function $f:X\to\mathbb{R}$ to be Lipschitz \cite[p.~2]{doremaleva2}.
\begin{definition}
A function $f:X\to\mathbb{R}$ is called Lipschitz if there exists $L>0$ such that 
$\left|f(y)-f(x)\right|\leq{L\left\|y-x\right\|_{X}}\mbox{ for all }x,y\in{X}\mbox{ with }y\neq{x}$.
If $f:X\to\mathbb{R}$ is a Lipschitz function then the number 
\begin{equation*}
\mbox{Lip}(f)=\sup\left\{\frac{\left|f(y)-f(x)\right|}{\left\|y-x\right\|_{X}}\mbox{ : }x,y\in{X}\mbox{, }y\neq{x}\right\}
\end{equation*}
is finite and is called the Lipschitz constant of $f$.
\end{definition}
\theoremstyle{plain}
Theorem~\ref{thm:udscrit} is an amalgamation of the results in \cite{doremaleva1} and \cite{doremaleva2}. Roughly speaking, it says that a closed set $S$ has the universal differentiability property if every point in $S$ can be approximated, in a special way, by line segments contained in $S$. We use this theorem in Section~\ref{sec:3} to construct a universal differentiability set with upper Minkowski dimension one.

\begin{theorem}\label{thm:udscrit}
Let $d\geq{2}$.
Suppose that $(\uds_{\lambda})_{\lambda\in[0,1]}$ is a family of closed subsets of $\mathbb{R}^{d}$ satisfying $\uds_{\lambda_{1}}\subseteq{\uds_{\lambda_{2}}}$ whenever $0\leq\lambda_{1}\leq\lambda_{2}\leq{1}$. Suppose further
that for every $\lambda\in[0,1)$, $\psi\in(0,1-\lambda)$ and $\eta\in(0,1)$, there exists
\begin{equation*}
\delta_{1}=\delta_{1}(\lambda,\psi,\eta)>0
\end{equation*}
such that 
whenever $x\in{\uds_{\lambda}}$, $\delta\in(0,\delta_{1})$ and $v_{1},v_{2},v_{3}$ are in the closed unit ball in $\mathbb{R}^{d}$, there exists ${v_{1}}',{v_{2}}',{v_{3}}'\in\mathbb{R}^{d}$ such that $\left\|{v_{i}}'-v_{i}\right\|\leq\eta$ and $[x+\delta{v_{1}}',x+\delta{v_{3}}']\cup[x+\delta{v_{3}}',x+\delta{v_{2}}']\subseteq{\uds_{\lambda+\psi}}$. Then the set
\begin{equation}\label{eq.uds}
S=\overline{\bigcup_{q<1}\uds_{q}}
\end{equation}
is a universal differentiability set. Moreover, $S$ has the property that whenever $y\in{S}$, $\rho>0$ and $g:\mathbb{R}^{d}\to\mathbb{R}$ is a Lipschitz function, there exists $x\in{S}$ such that $\left\|x-y\right\|<\rho$ and $g$ is differentiable at $y$.
\end{theorem}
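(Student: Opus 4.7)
The strategy is to deduce the theorem from the bent-line universal differentiability criterion established in \cite{doremaleva1} and extended in \cite{doremaleva3}. That criterion asserts that a closed set $T\subseteq\mathbb{R}^d$ is a universal differentiability set, with the stronger property that the point of differentiability of every Lipschitz function can be found arbitrarily near any prescribed point of $T$, provided that through every $x\in T$ one can, for arbitrarily small $\delta>0$ and every $\eta\in(0,1)$, inscribe bent line segments $[x+\delta v_1',x+\delta v_3']\cup[x+\delta v_3',x+\delta v_2']$ lying entirely in $T$, where $v_1',v_2',v_3'$ are $\eta$-approximations to any prescribed triple $v_1,v_2,v_3$ in the closed unit ball. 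Under this interpretation the entire work of the proof is to verify this bent-line containment for the set $S$ from \eqref{eq.uds}; the nested family $(\uds_\lambda)$ serves purely as a technical vehicle for the verification.

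\textbf{Verification.} Fix $x\in S$, $\eta\in(0,1)$, and $v_1,v_2,v_3$ in the closed unit ball. Since $x$ lies in the closure of $\bigcup_{q<1}\uds_q$, for any $\tau>0$ there exist $q=q(\tau)<1$ and $x_0\in\uds_q$ with $\|x-x_0\|<\tau$. Set $\psi=(1-q)/2$ and apply the hypothesis to $x_0\in\uds_q$ with perturbation budget $\eta/2$; this produces a threshold $\delta_1=\delta_1(q,\psi,\eta/2)$ so that, for every $\delta\in(0,\delta_1)$, there are vectors $v_1'',v_2'',v_3''$ with $\|v_i''-v_i\|\leq\eta/2$ and
\begin{equation*}
[x_0+\delta v_1'',x_0+\delta v_3'']\cup[x_0+\delta v_3'',x_0+\delta v_2''] \subseteq \uds_{(1+q)/2} \subseteq \bigcup_{q'<1}\uds_{q'} \subseteq S.
\end{equation*}
To relocate this bent line so that it passes through the actual point $x$, define $v_i':=v_i''+(x_0-x)/\delta$; then $x+\delta v_i'=x_0+\delta v_i''$, so the same bent line passes through $x$ and remains in $S$, while the triangle inequality yields $\|v_i'-v_i\|\leq\eta/2+\|x_0-x\|/\delta$, which is at most $\eta$ as soon as $\|x_0-x\|\leq\delta\eta/2$.

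\textbf{Main obstacle.} The delicate point is to coordinate the parameters $(\delta,q,x_0)$ so that simultaneously $\delta<\delta_1(q,(1-q)/2,\eta/2)$ and $\|x_0-x\|\leq\delta\eta/2$. Since each $\uds_q$ is closed, the latter condition forces $q\to 1$ as $\delta\to 0$ in the generic case, whereas the former constrains how rapidly $\delta_1(q,(1-q)/2,\eta/2)$ may decay along a sequence $q\nearrow 1$. If $x$ already belongs to some $\uds_{q_0}$ with $q_0<1$ one simply sets $x_0=x$, $q=q_0$, and the verification is immediate for every $\delta<\delta_1(q_0,(1-q_0)/2,\eta/2)$. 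In the general case one proceeds scale-by-scale: given a target $\epsilon>0$, choose $q<1$ with $\mathrm{dist}(x,\uds_q)<\epsilon\eta/4$, pick $x_0\in\uds_q$ within that distance of $x$, and select $\delta\in(2\|x_0-x\|/\eta,\,\min\{\epsilon,\,\delta_1(q,(1-q)/2,\eta/2)\})$; as $\epsilon$ ranges over a sequence tending to $0$ this produces the bent-line property along a sequence $\delta_n\searrow 0$, which is what the differentiability criterion of \cite{doremaleva1,doremaleva3} requires.
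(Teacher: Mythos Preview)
Your approach diverges from the paper's, and there is a genuine gap.

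You reduce everything to a purported single-set criterion: a closed $T$ is a universal differentiability set provided the bent-line approximation holds at \emph{every} $x\in T$. You then attempt to verify this for $S$. The paper does \emph{not} proceed this way, and for good reason: the criterion it actually invokes is \cite[Theorem~2.7 and Lemma~2.8]{doremaleva2}, and Theorem~2.7 takes the ordered family $(\mathfrak S,\preceq)=([0,1],\leq)$ together with the sets $(\uds_\lambda)$ as \emph{input}. The output of that theorem is a point $x$ lying in some $\uds_\lambda$ with $\lambda<1$, at which a directional derivative is almost locally maximal relative to the comparison sets $F_\epsilon=\uds_{\lambda_\epsilon}\cap B_{\sigma_\epsilon}(x)$, $\lambda_\epsilon\in(\lambda,1)$. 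Condition~(1) of Lemma~2.8 is then checked using the hypothesis at this particular $x\in\uds_\lambda$ with $\psi_\epsilon=\lambda_\epsilon-\lambda$; the threshold $\delta_\star$ one obtains covers \emph{all} $\delta\in(0,\delta_\star)$, exactly as that lemma requires. In short, the nested family is not a ``technical vehicle''; it is the mechanism by which the iterative almost-maximisation stays inside sets where the bent-line hypothesis is directly available.

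Your verification for $S$ breaks at points $x\in S\setminus\bigcup_{q<1}\uds_q$. There you must choose $q$ (hence $\delta_1(q,(1-q)/2,\eta/2)$) and $x_0\in\uds_q$, and then a scale $\delta$ with both $\delta<\delta_1$ and $2\|x_0-x\|/\eta<\delta$. Nothing in the hypothesis bounds $\delta_1(q,\cdot,\cdot)$ from below as $q\to1$, so the interval $(2\|x_0-x\|/\eta,\ \min\{\epsilon,\delta_1\})$ can be empty; for instance $\mathrm{dist}(x,\uds_q)$ could decay much more slowly than $\delta_1(q,(1-q)/2,\eta/2)$. Your fallback, that the differentiability criterion only needs a sequence $\delta_n\searrow0$, is not what \cite[Lemma~2.8]{doremaleva2} says: its condition~(1) asks for a threshold $\delta_\star(\epsilon,\eta)$ below which \emph{every} $\delta$ works. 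Even granting a sequence version, you have not shown such a sequence exists for these boundary points. The paper avoids this entirely because the point $x$ produced by Theorem~2.7 already sits in some $\uds_\lambda$, so the translation trick and the parameter-coordination problem never arise.
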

\begin{proof}[Proof of Theorem~\ref{thm:udscrit}]
Let $y\in{S}$, $\rho>0$ and $g:\mathbb{R}^{d}\to\mathbb{R}$ be a Lipschitz function. We show that there exists a point $x\in{S}$ such that $g$ is differentiable at $x$ and $\left\|x-y\right\|<\rho$. 

We may assume $\mbox{Lip}(g)>0$. Let $H$ be the Hilbert space $\mathbb{R}^{d}$. Set $\lambda_{1}=1$ and fix $\widetilde\lambda_{0}\in(0,1)$ and $y'\in{\uds_{\widetilde\lambda_{0}}}$ such that $\left\|y'-y\right\|<\rho/3$. 
Taking $\delta<\min\{\delta_1,\rho/4\}$
we can find a line segment $[a,b]\subseteq\mathbb{R}^{d}$ with $[a,b]\subseteq{\uds_{\lambda_{0}}\cap{B_{\rho/3}(y')}}$, where $\lambda_0\in(\widetilde\lambda_0,1)$. Then, by Lebesgue's theorem, there exists a point $x_{0}\in[a,b]$ such that the directional derivative $g'(x_{0},e_{0})$ exists, where 
$e_{0}=(b-a)/{\left\|b-a\right\|}$.

Set $f_{0}=g$, $(\mathfrak S,\preceq)=([0,1],\leq)$, $K=25\sqrt{2\mbox{Lip}(g)}$, $\sigma_{0}=\rho/3$, $\mu=\mbox{Lip}(g)$, and apply 
\cite[Theorem 2.7]{doremaleva2}
where $H=\real^d$.

This theorem provides us with
the Lipschitz function $f$, the number $\lambda\in(\lambda_{0},\lambda_{1})$, the point $x\in \uds_\lambda\cap B_{\sigma_0}(x_0)$,
the direction $e\in S^{d-1}$ and, for each $\epsilon>0$, the numbers $\sigma_{\epsilon}>0$ and $\lambda_{\epsilon}\in(\lambda,1)$ 
such that
$f-f_{0}$ is linear with operator norm less than or equal to $\mu$ and
the directional derivative $f'(x,e)>0$ is almost locally maximal in the following sense: 
Whenever 
\begin{enumerate}[(i)]
\item $x'\in F_\epsilon=\uds_{\lambda_{\epsilon}}\cap B_{\sigma_\epsilon}(x)$,
$f'(x',e')\geq{f'(x,e)}$ and
\item for any $t\in\mathbb{R}$
\begin{equation}
|(f(x'+te)-f(x'))-(f(x+te)-f(x))|\leq{K}\sqrt{f'(x',e')-f'(x,e)}|t|,
\end{equation}
\end{enumerate}
then we have $f'(x',e')<f'(x,e)+\epsilon$.

We then verify that
the conditions of \cite[Lemma 2.8]{doremaleva2}
hold for the function $f:\mathbb{R}^{d}\to\mathbb{R}$, the pair $(x,e)$ and the family of sets $F_{\epsilon}\subseteq\mathbb{R}^{d}$ 
defined in (i).
This would imply that the function $f$ is differentiable at $x$.

We already have that the derivative $f'(x,e)$ 
exists and is non-ne\-ga\-ti\-ve. We now verify condition (1) of \cite[Lemma 2.8]{doremaleva2}. Given $\epsilon>0$ and $\eta\in(0,1)$ we put $\psi_{\epsilon}=\lambda_{\epsilon}-\lambda$ and define
\begin{equation*}
\delta_{\star}=
\delta_\star(\epsilon,\eta)=\frac{1}{2}\min\left\{\delta_{1}(\lambda,\eta,\psi_{\epsilon}),\sigma_{\epsilon}\right\},
\end{equation*}
where $\delta_{1}(\lambda,\eta,\psi_{\epsilon})$ is defined in the hypothesis
of Theorem~\ref{thm:udscrit}.
Let $v_{1},v_{2},v_{3}\in{\overline{B}_{1}(0)}\subseteq\mathbb{R}^{d}$ and $\delta\in(0,\delta_{\star})$. Since $0<\delta<\delta_{1}(\lambda,\eta,\psi_{\epsilon})$, there exist, by the hypothesis of Theorem~\ref{thm:udscrit}, points ${v_{1}}',{v_{2}}',{v_{3}}'\in\mathbb{R}^{d}$ with $\left\|{v_{i}}'-v_{i}\right\|\leq\eta$ and
\begin{equation*}
[x+\delta{v_{1}}',x+\delta{v_{3}}']\cup[x+\delta{v_{3}}',x+\delta{v_{2}}']\subseteq{\uds_{\lambda+\psi_{\epsilon}}}=\uds_{\lambda_{\epsilon}}.
\end{equation*} 
Moreover, given that $\delta<\sigma_{\epsilon}$, we have
\begin{equation*}
[x+\delta{v_{1}}',x+\delta{v_{3}}']\cup[x+\delta{v_{3}}',x+\delta{v_{2}}']\subseteq \uds_{\lambda_{\epsilon}}\cap B_{\sigma_{\epsilon}}(x)=F_{\epsilon}.
\end{equation*}
Thus, condition (1) of \cite[Lemma 2.8]{doremaleva2} is verified. Note that $\mbox{Lip}(f)\leq\mbox{Lip}(g)+\mu=2\mbox{Lip}(g)$ so that $25\sqrt{\mbox{Lip}(f)}\leq{K}$. Now, condition (2) of \cite[Lemma 2.8]{doremaleva2} is apparent from the definition of the sets $F_{\epsilon}$ and (ii).

Therefore, by \cite[Lemma 2.8]{doremaleva2} the function $f$ is differentiable at $x$. Since $g-f$ is linear, we conclude that $g$ is also differentiable at $x$. Moreover, we have that $x\in{\uds_{\lambda_{\epsilon}}}\subseteq{S}$ and
\begin{equation*}
\left\|x-y\right\|\leq\left\|x-x_{0}\right\|+\left\|x_{0}-y'\right\|+\left\|y'-y\right\|<\rho/3+\rho/3+\rho/3=\rho.
\end{equation*}
\end{proof}

\section{The Set}\label{sec:3}
We let $d\geq{2}$ and construct a universal differentiability set of upper Minkowski dimension one in $\mathbb{R}^{d}$. There are many equivalent ways of defining the (upper and lower) Minkowski dimension of a bounded subset of $\mathbb{R}^{d}$. In addition to Definition~\ref{def:minkowskidimension} in Section~\ref{sec:1}, several examples can be found in \cite[p.~41-45]{mattila99}. The equivalent definition given below will be most convenient for our use.
By an $\epsilon$-cube, with centre $x\in\mathbb{R}^{d}$, parallel to $e\in{S^{d-1}}$, we mean any subset of $\mathbb{R}^{d}$ of the form
\begin{equation}\label{eq:parcubes}
C(x,\epsilon,e)=\left\{x+\sum_{i=1}^{d}t_{i}e_{i}\mbox{ :
}e_{1}=e\mbox{, }t_{i}\in[-\epsilon,\epsilon]\right\}.
\end{equation}
where $e_{2},\ldots,e_{d}\in{S^{d-1}}$ and $\langle{e_{i},e_{j}}\rangle=0$ whenever $i\neq{j}$.
\begin{definition}\label{minkdimdef}
Given a bounded subset $A$ of $\mathbb{R}^{d}$ and $\epsilon>0$, we denote
by $N_{\epsilon}(A)$ the minimum number of (closed) $\epsilon$-cubes required to
cover $A$. That is, $N_{\epsilon}(A)$ is the smallest integer $n$ for which
there exist $\epsilon$-cubes $C_{1},C_{2},\ldots,C_{n}$ such that
\begin{equation*}
A\subseteq\bigcup_{i=1}^{n}C_{i}.  
\end{equation*}
As in Definition~\ref{def:minkowskidimension}, we define the lower Minkowski dimension of $A$ by \eqref{eq:lowerminkowskidimension} and the upper Minkowski dimension of $A$ by \eqref{eq:upperminkowksidimension}.
\end{definition}
For a point $x\in\mathbb{R}^{d}$ and $w>0$, we shall write
$\overline{B}_{w}(x)$ for the closed ball with centre $x$ and radius $w$
with respect to the Euclidean norm. For a subset $S$ of $\mathbb{R}^{d}$, we
let $\overline{B}_{w}(S)=\bigcup_{x\in S}\overline{B}_{w}(x)$. The
cardinality of a finite set $F$ shall be denoted by $\left|F\right|$. Given a real number $\alpha$, we write $\left[\alpha\right]$ for the integer part of $\alpha$.

Fix two sequences of positive integers $(s_k)$ and $(M_k)$ such that the following conditions are satisfied:
\begin{equation}\label{eq:defsk}
3\le M_k\le s_k;\
M_k, s_k\to\infty;\
\frac{M_k\log s_k}{s_k}\to0
\end{equation}
and
there  exists a sequence $\tilde s_k\ge s_k$ such that
\begin{equation}\label{eq:defsko} 
\frac{\tilde s_{k}-\tilde s_{k-1}}{s_k}\to 0.
\end{equation}

Note that if  the sequence $(\tilde s_{k}-\tilde s_{k-1})_{k\ge2}$
is bounded and $s_k\to\infty$, then \eqref{eq:defsko} is satisfied. 
Hence an example of  sequences $(s_k), (\tilde s_k)$ satisfying 
\eqref{eq:defsk} and \eqref{eq:defsko} 
is 
$\tilde s_k=ak+b$ with $a>0$ and any integer sequence $s_k\to\infty$ such that $3\le s_k\le \tilde s_k$.

We also remark that if $s_k\to\infty$ is such that 
\begin{equation}\label{eq:exsk}
\frac{s_k}{s_{k+1}}\to1,
\end{equation} 
then \eqref{eq:defsko} is satisfied with
$\tilde s_k=s_k$.
Indeed, 
$\displaystyle\frac{\tilde s_k-\tilde s_{k-1}}{s_k}=1-
\frac{s_{k-1}}{s_k}\to0$.

An example of an integer sequence $s_k\to\infty$ satisfying \eqref{eq:exsk} is
$s_k=\max\{3,[F(k)]\}$, where $F(x)$ 
is a linear combination of powers of $x$  such that the 
highest power of $x$ is positive and 
has a positive coefficient. Also, whenever $s_k\to\infty$ satisfies the
condition \eqref{eq:exsk}, the sequence $s_k'=[\log s_k]$ also satisfies this condition and tends to infinity.

Once $(s_k)$ is defined there is much freedom to choose $(M_k)$. 
For example, we may take 
$M_k=\left[s_k^\alpha\right]$ with $\alpha\in(0,1)$ or $M_k=\left[\log s_k\right]$ etc.

Let now 
$\mathcal E_k$ be a maximal $\frac{1}{s_k}$-separated subset of $S^{d-1}$. We therefore get a collection of finite subsets $\mathcal E_k\subseteq S^{d-1}$ such that
\begin{equation}
\label{eq:defEk}
|\mathcal E_k|\le s_k^{2d}
\qquad\mbox{and}\qquad
\forall e\in S^{d-1} \exists\ e'\in \mathcal{E}_k
\mbox{ s.t. }
\|e-e'\|\le\frac1{s_k}.
\end{equation}

\begin{definition}\label{cubes}
Given a line segment $l=x+[a,b]e\subseteq\mathbb{R}^{d}$ and $0<w<\mbox{length}(l)/2$, define $F_{w}(l)$ to be a finite collection of cubes of the form $C(x_{i},w,e)$, defined by \eqref{eq:parcubes}, with $x_{i}\in{l}$,
such that
\begin{equation}\label{numberofcubes}
\overline{B}_{w}(l)\subseteq\bigcup_{C\in \mathcal{F}_{w}(l)} C\mbox{ and }
\left|\mathcal{F}_{w}(l)\right|<\mbox{length}(l)/w.
\end{equation}
\end{definition}
Fix an arbitrary number $\Y\in(1,2)$.
Let $l_{1}$ be a line segment in $\mathbb{R}^{d}$ of length $1$, set $w_{1}=\Y^{-s_{1}}$, $\mathcal{L}_{1}=\left\{l_{1}\right\}$,
$\mathcal{C}_{1}=\mathcal{F}_{w_{1}}(l_{1})$ and
$\mathcal{T}_{1}=\left\{\overline{B}_{w_{1}}(l_{1})\right\}$. We refer to
the collection $\mathcal{L}_{1}$ as `the lines of level~$1$', the
collection $\mathcal{C}_{1}$ as `the cubes of level~$1$' and the collection
$\mathcal{T}_{1}$ as `the tubes of level~$1$'. Note that $\mathcal{C}_{1}$
is a cover of the union of tubes in $\mathcal{T}_{1}$. 
Suppose that $k\geq{2}$, and that we have defined real numbers 
$w_r>0$ and the collections
$\mathcal{L}_{r}$ of lines, $\mathcal{T}_{r}$ of tubes and $\mathcal{C}_{r}$ of cubes of level $r=1,2,\ldots,k-1$ in such a way that
\begin{equation*}
\mathcal{T}_{r}=\left\{\overline{B}_{w_{r}}(l):l\in\mathcal{L}_{r}\right\};\
\mathcal{C}_{r}=\bigcup_{l\in\mathcal{L}_{r}}\mathcal{F}_{w_{r}}(l)
\textrm{ is a cover of }
\bigcup \{T: T\in\mathcal{T}_{r}\}.
\end{equation*}
We now describe how to
construct the lines, cubes and tubes of the $k$th~level.
We start with the definition of the new width. 
Set
\begin{equation}\label{newwidth}
w_{k}=\Y^{-s_{k}}w_{k-1}.
\end{equation}
The collections
$\mathcal{L}_k$ of lines, $\mathcal{T}_k$ of tubes and $\mathcal{C}_k$ of cubes
will be partitioned into exactly $M_{k}+1$ classes and each class will be further partitioned into categories according to the length of the
lines.  We first define the collections of lines, tubes and cubes of level~$k$, class~$0$
respectively by
\begin{equation}\label{linesclass0}
\mathcal{L}_{(k,0)}=\mathcal{L}_{k-1},
\mathcal{T}_{(k,0)}=\left\{\overline{B}_{w_{k}}(l):l\in\mathcal{L}_{(k,0)}\right\}
\textrm{ and }
\mathcal{C}_{(k,0)}=\bigcup_{l\in\mathcal{L}_{(k,0)}}\mathcal{F}_{w_{k}}(l).
\end{equation}
We will say that all lines, tubes and cubes of level $k$, class $0$ have the empty category. From \eqref{linesclass0}
and Definition~\ref{cubes},
we
have that $\mathcal{C}_{(k,0)}$ is a cover of the union of tubes in
$\mathcal{T}_{(k,0)}$. Using \eqref{numberofcubes}, we also have that
\begin{equation}\label{class0coverestimate1}
\left|\mathcal{C}_{(k,0)}\right|=\sum_{l\in\mathcal{L}_{(k,0)}}\left|\mathcal{F}_{w_{k}}(l)\right|\leq\frac{1}{w_{k}}\sum_{l\in\mathcal{L}_{(k,0)}}\mbox{length}(l).
\end{equation}
The intersection of each line $l$ in $\mathcal{L}_{(k,0)}=\mathcal{L}_{k-1}$ with a cube $C\in\mathcal{C}_{k-1}$ is a line segment of length at most $2w_{k-1}$. Moreover, the collection of cubes $\mathcal{C}_{k-1}$ is a cover of the union of lines in $\mathcal{L}_{(k,0)}$. It follows that
\begin{equation}\label{totallengthestimate}
\sum_{l\in\mathcal{L}_{(k,0)}}\mbox{length}(l)\leq{2\left|\mathcal{C}_{k-1}\right|w_{k-1}}.
\end{equation}
Combining \eqref{class0coverestimate1}, \eqref{totallengthestimate} and
\eqref{newwidth} yields
\begin{equation}\label{class0coverestimate2}
\left|\mathcal{C}_{(k,0)}\right|\leq{2\left|\mathcal{C}_{k-1}\right|\Y^{s_{k}}}.
\end{equation}
\begin{definition}\label{def:R}
Given a bounded line segment $l\subseteq\mathbb{R}^{d}$, an integer $j\geq{1}$ with $\mbox{length}(l)\geq \Y^{j} w_{k}/s_k$ and a direction $e\in{S^{d-1}}$, we define a collection of line segments $\mathcal{R}_{l}(j,e)$ as follows: Let 
$\Segment\subseteq{l}$ 
be a maximal $\frac{\Y^j{w_{k}}}{s_{k}}$-separated set and set
\begin{equation*}
\mathcal{R}_{l}(j,e)=\left\{\segment_{x}\mbox{ : }x\in\Segment\right\},
\end{equation*}
where $\segment_{x}$ is the line defined by
\begin{equation}\label{lineform}
\segment_{x}=x+[-1,1]\Y^{j}w_{k}e.
\end{equation}
We note for future reference that
\begin{equation}\label{numberoflines}
\left|\mathcal{R}_{l}(j,e)\right|\leq\frac{2s_{k}\mbox{
length}(l)}{\Y^{j}w_{k}}.
\end{equation}
\end{definition}
For $j\in\left\{1,2,\ldots,s_{k}\right\}$, we define the collection of lines
of level~$k$, class 1, category~$(j)$ by
\begin{equation}\label{linesclass1categoryj} 
\mathcal{L}_{(k,1)}^{(j)}=\bigcup_{l\in\mathcal{L}_{(k,0)}}\bigcup_{e\in\mathcal{E}_{k}}\mathcal{R}_{l}(j,e).
\end{equation}
We emphasise that all the lines in $\mathcal{L}_{(k,1)}^{(j)}$ have the
same length. Indeed, from Definition~\ref{def:R}, we get
\begin{equation}\label{samelength}
\mbox{length}(l)=2{\Y^{j}w_{k}}\mbox{ for all lines
}l\in\mathcal{L}_{(k,1)}^{(j)}.
\end{equation}
From \eqref{numberoflines} and \eqref{linesclass1categoryj}, it
follows that
\begin{equation}\label{lestimate1}
\left|\mathcal{L}_{(k,1)}^{(j)}\right|\leq2w_{k}^{-1}s_{k}\left|\mathcal{E}_{k}\right|\Y^{-j}\sum_{l\in\mathcal{L}_{(k,0)}}\mbox{length}(l).
\end{equation}
Together, \eqref{lestimate1}, \eqref{totallengthestimate} and
\eqref{newwidth} imply
\begin{equation}\label{lestimate2}
\left|\mathcal{L}_{(k,1)}^{(j)}\right|\leq
\left|\mathcal{C}_{k-1}\right|(4s_{k}\left|\mathcal{E}_{k}\right|)\Y^{s_{k}-j}.
\end{equation}
Let
\begin{equation}\label{cubesclass1categoryj}
\mathcal{C}_{(k,1)}^{(j)}=\bigcup_{l\in\mathcal{L}_{(k,1)}^{(j)}}\mathcal{F}_{w_{k}}(l).
\end{equation}
Then, using \eqref{numberofcubes}, \eqref{samelength} and
\eqref{lestimate2} we obtain
\begin{align}
\left|\mathcal{C}_{(k,1)}^{(j)}\right|
&\leq\sum_{l\in\mathcal{L}_{(k,1)}^{(j)}}\left|\mathcal{F}_{w_{k}}(l)\right|
\leq\frac{1}{w_{k}}\sum_{l\in\mathcal{L}_{(k,1)}^{(j)}}\mbox{length}(l)\notag\\
\label{sestimate1}
&\leq\frac{\left|\mathcal{L}_{(k,1)}^{(j)}\right|\times{2}\times\Y^{j}w_{k}}{w_{k}}\leq{8\left|\mathcal{C}_{k-1}\right|s_{k}\left|\mathcal{E}_{k}\right|\Y^{s_{k}}}.
\end{align}
The collection of tubes of level~$k$, class~$1$, category~$(j)$ is defined
by
\begin{equation}\label{tubesclass1categoryj}
\mathcal{T}_{(k,1)}^{(j)}=\left\{\overline{B}_{w_{k}}(l):l\in\mathcal{L}_{(k,1)}^{(j)}\right\}
\end{equation}
From Definition~\ref{cubes}, \eqref{cubesclass1categoryj} and
\eqref{tubesclass1categoryj} it is clear that
$\mathcal{C}_{(k,1)}^{(j)}$ is a cover of the union of tubes in
$\mathcal{T}_{(k,1)}^{(j)}$. We can also use \eqref{tubesclass1categoryj} and \eqref{lestimate2} 
to conclude that
\begin{equation}\label{testimate}
\left|\mathcal{T}_{(k,1)}^{(j)}\right|=\left|\mathcal{L}_{(k,1)}^{(j)}\right|\leq\left|\mathcal{C}_{k-1}\right|(4{s_k}|\mathcal E_k|)\Y^{s_k-j}.
\end{equation}
The collections of lines, cubes and tubes of level~$k$, class~$1$ are now
defined by
\begin{equation*}
\#_{(k,1)}=\bigcup_{j=1}^{s_{k}}\#_{(k,1)}^{(j)},
\end{equation*}
where $\#$ stands for $\mathcal L$, $\mathcal C$ or $\mathcal T$.
Note that $\mathcal{C}_{(k,1)}$ is a cover of the union of tubes in
$\mathcal{T}_{(k,1)}$. Moreover, in view of \eqref{sestimate1}, 
we get
\begin{equation}\label{1cubesestimate}
\left|{\mathcal{C}_{(k,1)}}\right|
\leq\sum_{j=1}^{s_{k}}\left|\mathcal{C}_{(k,1)}^{(j)}\right|
\leq2\left|\mathcal{C}_{k-1}\right|(4s_k^2|\mathcal E_k|)\Y^{s_k}.
\end{equation}
Suppose that $1\leq{m}<M_{k}$ and that we have defined the collections
\begin{equation*}
{\mathcal{L}_{(k,m)}}\mbox{, }{\mathcal{C}_{(k,m)}}\mbox{ and
}{\mathcal{T}_{(k,m)}}
\end{equation*}
of lines, cubes and tubes of level~$k$, class~$m$. Assume that these
collections are partitioned into categories 
\begin{equation*}
\mathcal{L}_{(k,m)}^{(j_{1},\ldots,j_{m})}\mbox{, }
\mathcal{C}_{(k,m)}^{(j_{1},\ldots,j_{m})}\mbox{ and }
\mathcal{T}_{(k,m)}^{(j_{1},\ldots,j_{m})}
\end{equation*}
where the $j_{i}$ are integers satisfying
\begin{equation}\label{lengthsequence}
1\leq{j_{i+1}}\leq{j_{i}}\leq{s_{k}}\mbox{ for all }i.
\end{equation}
Suppose that the following conditions hold.
\begin{align}\label{indi}
\tag{$\mbox{I}_{m}$}
\mbox{length}(l)&=2\Y^{j_m}w_{k}\mbox{ for all lines
}l\mbox{ in }\mathcal{L}_{(k,m)}^{(j_{1},\ldots,j_{m})},\\
\label{indii}
\tag{$\mbox{II}_{m}$}
\left|\mathcal{L}_{(k,m)}^{(j_{1},\ldots,j_{m})}\right|
&\leq
\left|\mathcal{C}_{k-1}\right|
(4s_{k}|\mathcal E_k|)^m\Y^{s_{k}-j_{m}},\\
\label{indiii}
\tag{$\mbox{III}_{m}$}
\mathcal{T}_{(k,m)}^{(j_{1},\ldots,j_{m})}
&=\left\{\overline B_{w_{k}}(l):l\in\mathcal{L}_{(k,m)}^{(j_{1},\ldots,j_{m})}\right\},\\
\label{indiv}
\tag{$\mbox{IV}_{m}$}
\mathcal{C}_{(k,m)}^{(j_{1},\ldots,j_{m})}
&=\bigcup_{l\in\mathcal{L}_{(k,m)}^{(j_{1},\ldots,j_{m})}}\mathcal{F}_{w_{k}}(l),\\
\label{indvi}
\tag{$\mbox{V}_{m}$}
\left|\mathcal{C}_{(k,m)}^{(j_{1},\ldots,j_{m})}\right|
&\leq
2\left|\mathcal{C}_{k-1}\right|(4s_k|\mathcal E_k|)^m\Y^{s_{k}}.
\end{align}
For an integer sequence $(j_{1},\ldots,j_{m},j_{m+1})$ satisfying
\eqref{lengthsequence} we define the collection of lines of level~$k$,
class~$(m+1)$, category~$(j_{1},\ldots,j_{m+1})$ by
\begin{equation}\label{linesm}
\mathcal{L}_{(k,m+1)}^{(j_{1},\ldots,j_{m+1})}=\bigcup_{l\in
\mathcal{L}_{(k,m)}^{(j_{1},\ldots,j_{m})}}\left(\bigcup_{e\in\mathcal{E}_{k}}\mathcal{R}_{l}(j_{m+1},e)\right).
\end{equation}
Note that every line in the collection $\mathcal{L}_{(k,m+1)}^{(j_{1},\ldots,j_{m+1})}$ has the same length. In fact, by Definition~\ref{def:R} we have that $(\mbox{I}_{m+1})$ is satisfied. Combining \eqref{numberoflines}, \eqref{indi} and \eqref{indii} we deduce the following:
\begin{align}
\left|\mathcal{L}_{(k,m+1)}^{(j_{1},\ldots,j_{m+1})}\right|
&\leq{2s_{k}\left|\mathcal{E}_{k}\right|w_{k}^{-1}\Y^{-j_{m+1}}\sum_{l\in
\mathcal{L}_{(k,m)}^{(j_{1},\ldots,j_{m})}}\mbox{length}(l)}\notag\\
\label{Lestimate}
&\leq
\left|\mathcal{C}_{k-1}\right|(4s_k|\mathcal E_k|)^{m+1}\times\Y^{s_k-j_{m+1}}.
\end{align}
Thus, $(\mbox{II}_{m+1})$ is satisfied. We define the collection of tubes
and cubes of level~$k$ and class~${m+1}$, category~$(j_{1},\ldots,j_{m+1})$ by 
($\mbox{III}_{m+1}$)
and ($\mbox{IV}_{m+1}$). Using \eqref{numberofcubes}, $(\mbox{I}_{m+1})$ and \eqref{Lestimate} we obtain
\begin{multline*}
\left|\mathcal{C}_{(k,m+1)}^{(j_{1},\ldots,j_{m+1})}\right|
\leq\sum_{l\in\mathcal{L}_{(k,m+1)}^{(j_{1},\ldots,j_{m+1})}}\left|\mathcal{F}_{w_{k}}(l)\right|
=\frac{1}{w_{k}}\sum_{l\in\mathcal{L}_{(k,m+1)}^{(j_{1},\ldots,j_{m+1})}}\mbox{length}(l)\\
\leq\frac{1}{w_{k}}\times{\left|\mathcal{L}_{(k,m+1)}^{(j_{1},\ldots,j_{m+1})}\right|}\times{2}\times{\Y^{j_{m+1}}w_{k}}
\leq{2\left|\mathcal{C}_{k-1}\right|{(4s_k|\mathcal E_k|)}^{m+1}\Y^{s_{k}}},
\end{multline*}
and this verifies $(\mbox{V}_{m+1})$. The collections of lines, tubes and cubes of level~$k$,
class~$m+1$ are given by
\begin{equation*}
\#_{(k,m+1)}=\bigcup_{s_k\ge j_1\ge\dots\ge j_{m+1}\ge1}\#_{(k,m+1)}^{(j_{1},\ldots,j_{m+1})},
\end{equation*}
where $\#$ stands for $\mathcal L$, $\mathcal T$ or $\mathcal C$.

Note that $\mathcal{C}_{(k,m+1)}$ is a cover of the union of tubes in
$\mathcal{T}_{(k,m+1)}$. Moreover, in view of 
($\mbox{V}_{m+1}$)
and
\eqref{lengthsequence} we have
\begin{multline}\label{msestimate}
\left|\mathcal{C}_{(k,m+1)}\right|
\leq
\sum_{(j_{1},\ldots,j_{m+1})}\left|\mathcal{C}_{(k,m+1)}^{(j_{1},\ldots,j_{m+1})}\right|
\\
\leq
2\left|\mathcal{C}_{k-1}\right|(4s_k|\mathcal E_k|)^{m+1}s_k^{m+1}\Y^{s_k}
=2\left|\mathcal{C}_{k-1}\right|(4s_k^2|\mathcal E_k|)^{m+1}\Y^{s_k},
\end{multline}
and this generalises \eqref{1cubesestimate} for arbitrary $0\le m<M_k$.
Finally, the collections of lines, tubes and cubes of level~$k$ are given by
\begin{equation}\label{levelktubes}
\mathcal{L}_{k}=\bigcup_{m=0}^{M_{k}}\mathcal{L}_{(k,m)}\mbox{, }\mathcal{T}_{k}=\bigcup_{m=0}^{M_{k}}\mathcal{T}_{(k,m)}\mbox{ and }\mathcal{C}_{k}=\bigcup_{m=0}^{M_{k}}{\mathcal{C}_{(k,m)}}.
\end{equation}
Note that $\mathcal{C}_{k}$ is a cover of the union of tubes in $\mathcal{T}_{k}$. Moreover, using 
\eqref{class0coverestimate2} and \eqref{msestimate} we get
\begin{equation}\label{eq:pf-cond-boxes}
\left|\mathcal{C}_{k}\right|\leq
2(M_k+1)\left|\mathcal{C}_{k-1}\right|(4s_k^2|\mathcal E_k|)^{M_k}\Y^{s_k}.
\end{equation}
The construction of the lines, tubes and cubes of all levels is now complete.

We now define a collection of closed sets $(\uds_{\lambda})_{\lambda\in[0,1]}$. Eventually, we will use these sets to form 
a compact universal differentiability set $S$ with upper Minkowski dimension one, defined by \eqref{eq.uds}. To do this, we follow a method invented by Dor\'{e} and Maleva and used in \cite{doremaleva1}, \cite{doremaleva2} and \cite{doremaleva3}. The sets 
$\uds_\lambda$ are defined similarly to the sets $(T_{\lambda})$ in 
\cite[Definition~2.3]{doremaleva2}.
\begin{definition}\label{finalset}
For $\lambda\in[0,1]$ we let
\begin{equation}\label{set}
\uds_{\lambda}=\bigcap_{k=1}^{\infty}\left(\bigcup_{0\leq{m_{k}}\leq{\lambda{M_{k}}}}\left(\bigcup_{l\in\mathcal{L}_{(k,m_{k})}}\overline{B}_{\lambda{w_{k}}}(l)\right)\right).
\end{equation}
\end{definition}
We emphasise that the single line segment $l_{1}$ of level $1$ is contained in the set ${\uds_{\lambda}}$ for every $\lambda\in[0,1]$. Hence, every $\uds_{\lambda}$ is non-empty. Note also that $\uds_{\lambda_{1}}\subseteq{\uds_{\lambda_{2}}}$ whenever $0\leq\lambda_{1}\leq\lambda_{2}\leq{1}$. Finally, since the unions in \eqref{set} are finite, it is clear that the sets $\uds_{\lambda}$ are closed.
\begin{lemma}\label{lemma:dimension}
For $\lambda\in[0,1]$, the set $\uds_{\lambda}$ has upper Minkowski dimension one.
\end{lemma}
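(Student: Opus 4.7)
The lower bound is immediate: for every $\lambda \in [0,1]$ the segment $l_1$ lies in $\uds_\lambda$, since setting $m_k = 0$ in \eqref{set} puts $l_1 \in \mathcal{L}_{(k,0)}$ into $\overline{B}_{\lambda w_k}(l_1)$ at each level. A line segment has upper Minkowski dimension one, so $\overline{\dim}_M(\uds_\lambda) \geq 1$ for free, and the real task is to show $\overline{\dim}_M(\uds_\lambda) \leq 1$, i.e., $N_\delta(\uds_\lambda)\delta^p$ stays bounded as $\delta \to 0$ for every $p > 1$.

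The plan is to exploit the cover $\mathcal{C}_k$, which by construction consists of $w_k$-cubes covering every tube of level $k$ and hence covering $\uds_\lambda$. Given small $\delta$, I pick $k$ with $w_{k+1}\sqrt d \leq \delta < w_k\sqrt d$; since each $w_{k+1}$-cube fits inside a $\delta$-cube of the same centre (in any orientation), this gives $N_\delta(\uds_\lambda) \leq |\mathcal{C}_{k+1}|$ and hence $N_\delta(\uds_\lambda)\delta^p \leq d^{p/2}\,|\mathcal{C}_{k+1}|\, w_k^p$. The problem is therefore reduced to $\sup_k |\mathcal{C}_{k+1}| w_k^p < \infty$. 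Iterating \eqref{eq:pf-cond-boxes} and plugging in $|\mathcal{E}_r| \leq s_r^{2d}$ and $w_k = \Y^{-(s_1+\cdots+s_k)}$, after taking logarithms and setting $\Sigma_k = \sum_{r=2}^k s_r$ one arrives at
\[
\log\bigl(|\mathcal{C}_{k+1}|\, w_k^p\bigr) \leq \mathrm{const} + (\log\Y)\bigl[s_{k+1} - (p-1)\Sigma_k\bigr] + C_d \sum_{r=2}^{k+1} M_r\log s_r,
\]
with $C_d$ depending only on $d$.

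So two assertions remain to verify: (a) $\sum_{r=2}^{k+1} M_r\log s_r = o(\Sigma_k)$, and (b) $(p-1)\Sigma_k - s_{k+1} \to +\infty$ as $k \to \infty$. Claim (a) follows from the hypothesis $M_k\log s_k/s_k \to 0$ in \eqref{eq:defsk} by a routine $\epsilon$-splitting of the summation (together with $\Sigma_k \to \infty$, which holds since $s_k \to \infty$). Claim (b), which I expect to be the main obstacle, is precisely why condition \eqref{eq:defsko} is imposed. For any prescribed $\epsilon > 0$ one has $\tilde s_r - \tilde s_{r-1} \leq \epsilon s_r$ for all large $r$; telescoping gives $\Sigma_k \geq \epsilon^{-1}(\tilde s_k - O(1))$, and since $\tilde s_k \geq \tilde s_{k+1} - \epsilon s_{k+1} \geq (1-\epsilon)s_{k+1}$ we deduce
\[
\Sigma_k \geq \epsilon^{-1}(1-\epsilon)\, s_{k+1}\,(1 - o(1)).
\]
Given $p > 1$, choosing $\epsilon \in (0,(p-1)/p)$ makes $(p-1)\epsilon^{-1}(1-\epsilon) > 1$, so eventually $(p-1)\Sigma_k \geq (1+c)\, s_{k+1}$ for some $c > 0$, as required.

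Combining (a) and (b), the right-hand side of the displayed estimate tends to $-\infty$, so $|\mathcal{C}_{k+1}|\, w_k^p \to 0$, and a fortiori $N_\delta(\uds_\lambda)\delta^p \to 0$. This yields $\overline{\dim}_M(\uds_\lambda) \leq p$ for every $p > 1$, hence $\overline{\dim}_M(\uds_\lambda) \leq 1$; combined with the lower bound we conclude $\overline{\dim}_M(\uds_\lambda) = 1$. The conceptual heart of the argument, as flagged in the introduction, is that bounding $N_\delta$ by the finer cover $\mathcal{C}_{k+1}$ introduces a blow-up factor $\Y^{p s_{k+1}}$, and condition \eqref{eq:defsko} is exactly the mechanism that ensures the cumulative saving $\Y^{(p-1)\Sigma_k}$ beats this blow-up for every $p > 1$.
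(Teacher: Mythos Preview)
Your argument is correct and shares the paper's overall strategy: cover $\uds_\lambda$ by the $w_{k+1}$-cubes of $\mathcal{C}_{k+1}$ and reduce the upper bound to showing that $|\mathcal{C}_{k+1}|w_k^p$ stays bounded for each $p>1$. The bookkeeping, however, is organised differently. The paper introduces the weighted sequence $a_k=|\mathcal{C}_k|w_k^p\Y^{p\tilde s_k}$ and proves in a single ratio estimate that $a_k/a_{k-1}\to 0$, using the pointwise hypotheses \eqref{eq:defsk} and \eqref{eq:defsko} directly; no summation or telescoping is needed. You instead iterate \eqref{eq:pf-cond-boxes} to a product, take logarithms, and control the resulting sums via your claims (a) and (b), where (b) requires the telescoping lower bound $\Sigma_k\gtrsim\epsilon^{-1}\tilde s_k$. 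Both routes rely on the same two hypotheses and reach the same conclusion; the paper's is more compact, while yours makes the role of the cumulative saving $\Sigma_k$ explicit. One small logical wrinkle: your claim (a) as stated, $\sum_{r\le k+1}M_r\log s_r=o(\Sigma_k)$, does not follow from \eqref{eq:defsk} and $\Sigma_k\to\infty$ alone---the $\epsilon$-splitting only yields $o(\Sigma_{k+1})$, and passing to $o(\Sigma_k)$ needs $s_{k+1}=O(\Sigma_k)$, which is itself a consequence of your claim (b). Since (b) is established independently, the argument still closes; you may simply wish to note the dependence.
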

\begin{proof}
For any $\lambda\in[0,1]$ we have that $\uds_{\lambda}$ contains a line segment. Hence, each of the sets $\uds_{\lambda}$ has upper Mikowski dimension at least one. We also have $\uds_{\lambda}\subseteq{\uds_{1}}$ for all $\lambda\in[0,1]$. Therefore, to complete the proof, it suffices to show that the set $\uds_{1}$ has upper Minkowski dimension one. From \eqref{levelktubes}, it is clear that
for all $k\geq{1}$ and $0\leq{m}\leq{M_{k}}$
\begin{equation*}
\bigcup_{l\in\mathcal{L}_{(k,m)}}\overline{B}_{w_{k}}(l)\subseteq\bigcup_{l\in\mathcal{L}_{k}}\overline{B}_{w_{k}}(l)
=\bigcup_{T\in\mathcal{T}_{k}} T.
\end{equation*}
We conclude, using Definition~\ref{finalset}, that for all $k\geq{1}$
\begin{equation}\label{contained}
\uds_{1}\subseteq\bigcup_{T\in\mathcal{T}_{k}} T.
\end{equation}
Let $k\geq{1}$. Recall that $\mathcal{C}_{k}$ is a finite collection of $w_{k}$-cubes which cover the union of tubes $T$ in $\mathcal{T}_{k}$. Therefore, in view of \eqref{contained}, we have that $\mathcal{C}_{k}$ is also a cover of $\uds_{1}$.  By Definition~\ref{minkdimdef}, this means
\begin{equation}\label{eq:Nwk}
N_{w_{k}}(\uds_{1})\leq\left|\mathcal{C}_{k}\right|\mbox{ for all }k\geq{1}.
\end{equation}
Fix an arbitrary $p\in(1,2)$. We complete the proof of this lemma by showing that $\overline{\dim}_{M}(\uds_{1})\leq{p}$. 

For this fixed $1<p<2$ we claim that 
the sequence 
$\left|\mathcal{C}_{k}\right|{w_{k}}^{p}\Y^{ps_k}$ is bounded, i.e.
there exist 
$H>0$ such that 
\begin{equation}
\label{eq:claim}
|\mathcal{C}_{k}|{w_{k}}^{p}\Y^{ps_k}\le H
\quad
\forall k\ge1.
\end{equation}
Assume that the claim is valid. 
Fix an arbitrary $w\in(0,w_1)$. There exists an integer $k\ge 1$ such that $w_{k+1}\leq{w}<w_{k}$. This implies $N_{w}(\uds_{1})\leq{N_{w_{k+1}}(\uds_{1})}$ so that
\begin{equation}\label{eq:defRk}
N_{w}(\uds_{1})w^{p}\leq{N_{w_{k+1}}(\uds_{1})w_{k}^p}=N_{w_{k+1}}(\uds_{1})w_{k+1}^{p}\Y^{ps_{k+1}}\le H.
\end{equation}  
Hence, the sequence $N_{w}(\uds_{1})w^{p}$ is uniformly bounded from above by a fixed constant $H$. Since this is true 
for any arbitrarily small ${w\in(0,w_{1})}$, we conclude that $\overline{\dim}_{M}(\uds_{1})\leq{p}$.

It only remains to establish the  claim \eqref{eq:claim}. 
We prove a more general statement, namely, that the sequence
$|\mathcal C_k|w_k^p\Y^{p \tilde s_k}$ tends to zero.

From \eqref{eq:defsk}, it follows that $s_{k}\geq M_{k}+1\geq 4$ for sufficiently large $k$. Using this, together with \eqref{eq:pf-cond-boxes}, \eqref{eq:defEk} and \eqref{newwidth}
we obtain
\begin{align}
\frac{|\mathcal C_k|w_k^p\Y^{p \tilde s_k}}{|\mathcal C_{k-1}|w_{k-1}^p\Y^{p \tilde s_{k-1}}}
&\le 
2(M_k+1)(4s_k^2|\mathcal E_k|)^{M_{k}}\Y^{-(p-1)s_{k}}\Y^{p(\tilde s_k-\tilde s_{k-1})}\notag\\
&\le
s_k^2s_k^{(3+2d)M_k}\Y^{-(p-1)s_k}\times
\Y^{p(\tilde s_k-\tilde s_{k-1})}\notag\\
\label{eq:estsk}&\le
\Y^{(p-1) s_k/2}\times
\Y^{-(p-1)s_k}\times
\Y^{p(\tilde s_k-\tilde s_{k-1})}
\end{align}
for sufficiently large $k$. 
The latter inequality follows from
\[\frac{M_k\log s_k}{s_k}<\frac{(p-1)\log\Y}{2(5+2d)},\] 
which is true for sufficiently large $k$.
We then see that the product of the three terms in \eqref{eq:estsk} 
tends to zero as $k\to\infty$, since \eqref{eq:defsko} implies that
\[ 
p(\tilde s_k-\tilde s_{k-1})<(p-1) s_k/4
\]
for $k$ sufficiently large. 
\end{proof}
\section{Main Result}\label{sec:4}  
The objective of this section is to prove Theorem~\ref{thm:main} which guarantees, in every finite dimensional space,
the existence of a compact universal differentiability set of upper and lower Minkowski dimension one. We should note that one cannot achieve a better Minkowski dimension as any universal differentiability set has Hausdorff dimension at least one \cite[Lemma~1.5]{doremaleva2}, hence Minkowski dimension of a universal differentiability set should be at least one. We also note that we will always assume $d\ge2$ as the case $d=1$ is trivial, we can simply take $S=[0,1]$. 
 
We will first need to establish several lemmas. The statements we prove typically concern a line $l$ of level~$k$, class~$m$, category~$(j_{1},\ldots,j_{m})$ where $0\leq{m}\leq{M_{k}}$. When $m=0$, we interpret the category $(j_{1},\ldots,j_{m})$ as the empty category and assume $j\leq{j_{m}}$ for all integers $j$.
\begin{lemma}\label{basic}
Let $k\geq{2}$, $0\leq{m}<M_{k}$ and
and $l\in\mathcal L_{(k,m)}^{(j_1,\dots,j_m)}$.
Let $e\in{\mathcal{E}_{k}}$ and
$1\leq j_{m+1}\leq j_m\leq s_k$.
If $x\in l$, then there exists $x'\in l$ such that 
$\left\|x'-x\right\|\leq\Y^{j_{m+1}}w_{k}/s_{k}$
and 
\begin{equation*}
l'=x'+[-1,1]Q^{j_{m+1}}w_{k}e\in
\mathcal L_{(k,m+1)}^{(j_{1},\ldots,j_{m},j_{m+1})}
\end{equation*}
\end{lemma}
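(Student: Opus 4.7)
The plan is to unwind the definitions directly: $\mathcal L_{(k,m+1)}^{(j_1,\dots,j_{m+1})}$ is built out of the sets $\mathcal R_l(j_{m+1},e)$ by \eqref{linesm}, and $\mathcal R_l(j_{m+1},e)$ consists of the line segments $\segment_y = y + [-1,1]\Y^{j_{m+1}}w_k e$ as $y$ runs over a maximal $\frac{\Y^{j_{m+1}}w_k}{s_k}$-separated subset $\Segment\subseteq l$ (Definition~\ref{def:R}). So the statement of the lemma will follow once we locate the right $x'\in\Segment$.

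First I would check that $\mathcal R_l(j_{m+1},e)$ is actually well-defined, i.e. that the length hypothesis $\mathrm{length}(l)\ge \Y^{j_{m+1}}w_k/s_k$ from Definition~\ref{def:R} holds. If $m\ge 1$, then by $(\mathrm{I}_m)$ we have $\mathrm{length}(l) = 2\Y^{j_m}w_k$, and since $j_{m+1}\le j_m$ and $s_k\ge 3$ this gives $\mathrm{length}(l)\ge 2\Y^{j_{m+1}}w_k\ge \Y^{j_{m+1}}w_k/s_k$. If $m=0$, then $l\in\mathcal L_{(k,0)}=\mathcal L_{k-1}$; using \eqref{newwidth} and the fact that each line of level $k-1$ has length at least $w_{k-1}=\Y^{s_k}w_k$ (either the level-$1$ line of length $1$, or by $(\mathrm{I}_{m'})$ applied to the previous level), we get $\mathrm{length}(l)\ge \Y^{s_k}w_k\ge \Y^{j_1}w_k/s_k$ since $j_1\le s_k$.

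Next I would extract $x'$ by maximality. Let $\Segment\subseteq l$ be a maximal $\frac{\Y^{j_{m+1}}w_k}{s_k}$-separated subset used to define $\mathcal R_l(j_{m+1},e)$. For the given $x\in l$, maximality implies that $\Segment\cup\{x\}$ cannot be $\frac{\Y^{j_{m+1}}w_k}{s_k}$-separated, so some $x'\in\Segment$ satisfies $\|x'-x\|\le\Y^{j_{m+1}}w_k/s_k$ (with equality only if $x\in\Segment$ already, in which case we can take $x'=x$).

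Finally, by the definition of $\mathcal R_l(j_{m+1},e)$ via \eqref{lineform}, the line segment $l'=x'+[-1,1]\Y^{j_{m+1}}w_k e = \segment_{x'}$ belongs to $\mathcal R_l(j_{m+1},e)$. By \eqref{linesm}, $\mathcal R_l(j_{m+1},e)\subseteq \mathcal L_{(k,m+1)}^{(j_1,\ldots,j_{m+1})}$, which yields the conclusion. The lemma is essentially a definitional unpacking, so the only mild technical point is verifying the length hypothesis in the $m=0$ case, where one must look back at the construction of $\mathcal L_{k-1}$ rather than use $(\mathrm{I}_m)$ directly.
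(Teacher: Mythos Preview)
Your proof is correct and follows exactly the paper's approach: the paper's own proof is the single sentence ``by definition, the collection $\mathcal R_{l}(j_{m+1},e)$ has an element $l'$ satisfying the conclusions of this lemma,'' and you have simply unpacked what ``by definition'' entails, including the length check needed for Definition~\ref{def:R} to apply. The parenthetical remark about equality is slightly misworded (if $x\in\Segment$ you get distance $0$, not equality), but this does not affect the argument.
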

\begin{proof}
We observe that 
by definition, the collection $\mathcal R_{l}(j_{m+1},e)$ has an element $l'$ satisfying the conclusions of this lemma.
\end{proof}
\begin{lemma}\label{approx}
Let $k\geq{2}$ and suppose $1\leq{m}\leq{M_{k}}$. Let 
$x\in l\in\mathcal L_{(k,m)}^{(j_{1},\ldots,j_{m})}$ and
$i_{m}$ be an integer with $j_{m}<i_{m}\leq{s_{k}}$. Then there exists an integer sequence
$s_{k}\geq{i_{1}}\geq\ldots\geq{i_{m-1}}\geq{i_{m}}$ and a line 
$l'\in \mathcal L_{(k,m)}^{(i_{1},\ldots,i_{m})}$, 
such that $l'$ is
parallel to $l$ and there exists a point $x'\in{l'}$ with
$\left\|x'-x\right\|\leq\frac{m\times{\Y^{i_{m}}w_{k}}}{s_{k}}$.
\end{lemma}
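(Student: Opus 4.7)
My plan is to prove Lemma~\ref{approx} by induction on $m$, repeatedly invoking Lemma~\ref{basic}, which is the engine that, given any point on a level-$(m-1)$ line and a direction $e\in\mathcal{E}_k$, produces nearby a level-$m$ line of prescribed category through a close-by point. The structural fact I use at every level is that any line $l\in\mathcal{L}_{(k,m)}^{(j_1,\dots,j_m)}$ arises as $l = y+[-1,1]\Y^{j_m}w_k e$ for some $e\in\mathcal{E}_k$ and some $y$ lying on a parent line $l_0\in\mathcal{L}_{(k,m-1)}^{(j_1,\dots,j_{m-1})}$. Writing $x = y+te$ with $|t|\leq \Y^{j_m}w_k$, the task reduces to producing a parallel line $l'$ of category $(i_1,\dots,i_m)$ whose centre $y''$ approximates $y$ within $m\Y^{i_m}w_k/s_k$, after which $x' := y''+te$ automatically lies on $l'$ (because $|t|\leq \Y^{j_m}w_k<\Y^{i_m}w_k$) and is within the required distance of $x$. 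For the base case $m=1$, I apply Lemma~\ref{basic} to $y\in l_0\in\mathcal{L}_{(k,0)}$ with direction $e$ and level $i_1$ (permissible because the empty $j$-category imposes no constraint on $i_1$); this immediately yields the desired $l'\in\mathcal{L}_{(k,1)}^{(i_1)}$ with displacement at most $\Y^{i_1}w_k/s_k$.

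For the inductive step, I split according to whether $i_m\leq j_{m-1}$ or $i_m > j_{m-1}$. In the first case I keep the ancestor indices unchanged, setting $(i_1,\dots,i_{m-1}) = (j_1,\dots,j_{m-1})$, and apply Lemma~\ref{basic} directly to $y\in l_0$ with direction $e$ and level $i_m$; the monotonicity $i_m\leq i_{m-1} = j_{m-1}$ required by Lemma~\ref{basic} is exactly the case assumption, and the resulting line $l'$ sits in $\mathcal{L}_{(k,m)}^{(i_1,\dots,i_m)}$ with displacement at most $\Y^{i_m}w_k/s_k\leq m\Y^{i_m}w_k/s_k$. In the second case, I apply the inductive hypothesis to $y\in l_0$ with the choice $i_{m-1}:=i_m > j_{m-1}$, obtaining a sequence $s_k\geq i_1\geq\dots\geq i_{m-1}$ and a line $l_0'\in\mathcal{L}_{(k,m-1)}^{(i_1,\dots,i_{m-1})}$ parallel to $l_0$ together with a point $\tilde y\in l_0'$ satisfying $\|\tilde y-y\|\leq (m-1)\Y^{i_m}w_k/s_k$; then applying Lemma~\ref{basic} to $\tilde y\in l_0'$ with direction $e$ and level $i_m$ (monotonicity is again $i_m\leq i_{m-1}$) produces a parallel line $l'\in\mathcal{L}_{(k,m)}^{(i_1,\dots,i_m)}$ through a point $y''$ within $\Y^{i_m}w_k/s_k$ of $\tilde y$. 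The triangle inequality then gives the total displacement $\|x'-x\|\leq \Y^{i_m}w_k/s_k + (m-1)\Y^{i_m}w_k/s_k = m\Y^{i_m}w_k/s_k$.

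The main difficulty to anticipate is precisely this case split on the relation between $i_m$ and $j_{m-1}$: the inductive hypothesis only accepts $i_{m-1}>j_{m-1}$, so the ``small $i_m$'' regime cannot be handled recursively and must be resolved by peeling off only the last level inside the originally given parent $l_0$, while the ``large $i_m$'' regime forces the recursive choice $i_{m-1}=i_m$ so that the accumulated errors telescope to exactly $m\Y^{i_m}w_k/s_k$. Alongside this, one has to verify in each case that the ancestor indices $(i_1,\dots,i_{m-1})$ together with $i_m$ respect the monotonicity condition \eqref{lengthsequence}; this is immediate from the $j$-sequence and the case hypothesis in the first branch, and from the induction hypothesis together with $i_{m-1}=i_m$ in the second.
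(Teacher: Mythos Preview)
Your proof is correct and follows essentially the same route as the paper's own argument: induction on $m$, the same case split on whether $i_m\leq j_{m-1}$ or $i_m>j_{m-1}$, the same use of Lemma~\ref{basic} to peel off the final level, and the same translation trick $x'=y''+te$ (the paper writes this as $x'=z'-\beta e$) to convert an approximation of the centre into an approximation of the given point. The only cosmetic difference is that the paper folds the base case $m=1$ into the first branch of the case split (noting that $i_1\leq j_0$ holds vacuously), whereas you treat it separately.
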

\begin{proof} 
Suppose that either 
\begin{enumerate}[(i)]
\item $n=1$, or
\item $2\leq{n}\leq{M_{k}}$ and the statement of Lemma~\ref{approx} holds for integers $m=1,\ldots,n-1$.
\end{enumerate}  
We prove that in both cases, the statement of Lemma~\ref{approx} holds for $m=n$. The proof will then be complete, by induction. 

Let the line $l$, integers $j_{1},\ldots,j_{n},i_{n}$ and point $x\in{l}$ be given by the hypothesis of Lemma~\ref{approx} when we set $m=n$. Let $e\in\mathcal{E}_{k}$ be the direction of $l$. By \eqref{linesclass1categoryj} in case (i), or \eqref{linesm} in case (ii), there
exists a line $l^{(n-1)}$ of level~$k$,
class~$n-1$, category~$(j_{1},\ldots,j_{n-1})$ such that the line $l$ belongs to the collection
$\mathcal{R}_{l^{(n-1)}}(j_{n},e)$.

Let the line segment $l^{(n-1)}$ be parallel to $f^{(n-1)}\in{{S}^{1}}$. By
Definition~\ref{def:R}, the line $l$ has the form
\begin{equation*}
l=z+[-1,1]\Y^{j_{n}}w_{k}e
\end{equation*}
where $z\in l^{(n-1)}$. Therefore, we may write
\begin{equation}\label{z}
z=x+\beta e,
\end{equation}
where
\begin{equation}\label{newdelta1}
\left|\beta \right|\leq\Y^{j_n}w_k.
\end{equation}
We now distinguish between two cases. First suppose that
$i_{n}\leq{j_{n-1}}$. Note that this is certainly the case if $n=1$. Setting $i_{a}=j_{a}$ for $a=1,\ldots,n-1$, we get
that $s_{k}\geq{i_{1}}\geq\ldots\geq{i_{n-2}}\geq{i_{n-1}}\ge i_n$. The 
line $l^{(n-1)}\in\mathcal{L}_{(k,n-1)}^{(i_{1},\ldots,i_{n-1})}$, the direction $e\in\mathcal{E}_{k}$, the integer $i_{n}$ and the point $z\in{l^{(n-1)}}$ now satisfy the conditions of Lemma~\ref{basic}. Hence
there is a line $l'$ of level~$k$,
class~$n$, category~$(i_{1},\ldots,i_{n})$ and a point $z'$ with
\begin{equation}\label{z'toz}
\left\|z'-z\right\|\leq\frac{\Y^{i_{n}}w_{k}}{s_{k}},
\end{equation}
such that the line segment $l'$ is given by
\begin{equation*}\label{linel'}
l'=z'+[-1,1]\Y^{i_{n}}w_{k}e.
\end{equation*}
Finally, set
\begin{equation*}\label{x'case1}
x'=z'-\beta e,
\end{equation*}
so that $x'\in{l'}$, using 
\eqref{newdelta1}. We deduce,
using 
\eqref{z'toz} and \eqref{z} that
$\left\|x'-x\right\|\leq\frac{\Y^{i_{n}}w_{k}}{s_{k}}\leq\frac{n\times{\Y^{i_{n}}w_{k}}}{s_{k}}$. This completes
the proof for the case $i_{n}\leq{j_{n-1}}$.
\newline\newline
Now suppose that $i_{n}>j_{n-1}$. In this situation, we must be in case (ii). 
We set $i_{n-1}=i_{n}>j_{n-1}$. The
conditions of Lemma~\ref{approx} are now readily verified for $z\in l^{(n-1)}\in\mathcal{L}_{(k,n-1)}^{(j_{1},\ldots,j_{n-1})}$,
and the integer $i_{n-1}$. 
Therefore, by (ii) and Lemma~\ref{approx}, there exists an integer sequence $s_{k}\geq{i_{1}}\geq\ldots\geq{i_{n-2}}\geq{i_{n-1}}$ and a line $l''\in\mathcal L_{(k,n-1)}^{(i_{1},\ldots,i_{n-1})}$ 
such that $l''$
is parallel to $l^{(n-1)}$ and there exists a point $y''\in{l''}$ such that
\begin{equation}\label{y''}
\left\|y''-z\right\|\leq\frac{(n-1)\times{\Y^{i_{n-1}}w_{k}}}{s_{k}}.
\end{equation}
The conditions of Lemma~\ref{basic} are now readily verified for the 
line $l''\in\mathcal{L}_{(k,n-1)}^{(i_{1},\ldots,i_{n-1})}$, the direction $e\in\mathcal{E}_{k}$, the integer $i_{n}$ and the point $y''\in{l''}$. Hence
there exists a line 
$l'\in\mathcal L_{(k,n)}^{(i_{1},\ldots,i_{n})}$ 
and a point $y'\in{l'}$ such that
\begin{equation}\label{y'}
\left\|y'-y''\right\|\leq\Y^{i_{n}}w_{k}/s_{k}.
\end{equation}
and the line $l'$ is given by
\begin{equation*}\label{newl'}
l'=y'+[-1,1]\Y^{i_{n}}w_{k}e.
\end{equation*}
We set
\begin{equation*}\label{x'}
x'=y'-\beta e.
\end{equation*}
Using 
\eqref{newdelta1} and $i_n>j_n$ 
we get 
that $x'\in{l'}$. Moreover, using \eqref{z}, \eqref{y''} and \eqref{y'}, 
we obtain $\left\|x'-x\right\|\leq\frac{n\times{\Y^{i_{n}}w_{k}}}{s_{k}}$.
\end{proof}
\begin{lemma}\label{lemma:c1}
Let $\lambda\in[0,1)$, 
$\psi\in\bigl(0,1-\lambda\bigr)$ 
and suppose that $x\in{\uds_{\lambda}}$. Suppose that the integer $n$ and number $\delta>0$ satisfy 
\begin{equation}\label{eq:tnd}
\psi{\Y^{t-1}w_{n}}<\delta\leq\psi{\Y^{t}w_{n}}
\mbox{ and }\frac{1}{s_{n}}\leq\psi
\end{equation}
where $t\in\left\{0,1,\ldots,s_{n}-1\right\}$. 

Let $f\in\mathcal{E}_{n}$ and suppose that 
$y\in l\in\mathcal L_{(n,\myr)}^{(h_{1},\ldots,h_{\myr})}$, 
where 
\begin{equation}\label{eq:ch}
\myr\leq(\lambda+\psi)M_{n}-2\mbox{, }h_{\myr}=t+1.
\end{equation}
Then there exists a line 
$l'\in\mathcal L_{(n,1+\myr)}^{(h_{1},\ldots,h_{\myr},t+1)}$ 
and a point $y'\in l$ such that 
\begin{align}
\left\|y'-y\right\|&\leq\frac{\Y}{\psi{s_{n}}}\times\delta, \label{eq:y-y'} \\
l'&=y'+[-1,1]\Y^{t+1}w_{n}f \label{eq:lineeq},\\
\text{ and }
y'+[-1,1]\ \tau f
&\subseteq{\uds_{\lambda+\psi}\cap{l'}},\text{ whenever }\notag\\
\label{eq:tau}
0\leq
\tau&\leq(\Y-\frac{\Y}{\psi s_{n}})\delta-\left\|y-x\right\|.
\end{align}
\end{lemma}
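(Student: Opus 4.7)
The plan is to apply Lemma~\ref{basic} to construct $l'$ and $y'$, and then to verify the inclusion \eqref{eq:tau} by checking the defining intersection in Definition~\ref{finalset} level by level.

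First, I apply Lemma~\ref{basic} to the line $l$ with direction $e=f\in\mathcal{E}_n$ and new category index $j_{m+1}=t+1$. The hypothesis $1\leq j_{m+1}\leq j_m=h_{\myr}\leq s_n$ holds because $h_{\myr}=t+1$ by \eqref{eq:ch} and $t\in\{0,\ldots,s_n-1\}$. This produces $y'\in l$ with $\|y'-y\|\leq\Y^{t+1}w_n/s_n$ together with $l'=y'+[-1,1]\Y^{t+1}w_n f\in\mathcal{L}_{(n,\myr+1)}^{(h_1,\ldots,h_{\myr},t+1)}$, which gives \eqref{eq:lineeq}; the bound \eqref{eq:y-y'} is then a direct consequence of \eqref{eq:tnd}. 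That $y'+\tau f$ remains inside $l'$ whenever $\tau\leq(\Y-\Y/(\psi s_n))\delta$ follows from $\tau<\Y\delta\leq\psi\Y^{t+1}w_n\leq\Y^{t+1}w_n$ (using $\delta\leq\psi\Y^t w_n$ and $\psi<1$), and $\Y^{t+1}w_n$ is exactly the half-length of $l'$.

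For the remaining inclusion $y'+\tau f\in\uds_{\lambda+\psi}$, Definition~\ref{finalset} demands that at each level $k\geq 1$ the point lies in the $(\lambda+\psi)w_k$-tube around some line of class at most $(\lambda+\psi)M_k$. I would split into three cases. When $k=n$, the point lies on $l'\in\mathcal{L}_{(n,\myr+1)}$ with $\myr+1\leq(\lambda+\psi)M_n$ by \eqref{eq:ch}. When $k>n$, iterating the identity $\mathcal{L}_{(j,0)}=\mathcal{L}_{j-1}$ from $j=n+1$ upwards shows $l'\in\mathcal{L}_{(k,0)}$, so $l'$ survives as a class-$0$ line at level $k$ and the point on it lies automatically in the required tube. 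When $k<n$, the triangle inequality together with the upper bound on $\tau$ gives
\begin{equation*}
\|y'+\tau f-x\|\leq\|y'-y\|+(\|y-x\|+\tau)\leq\frac{\Y\delta}{\psi s_n}+\left(\Y-\frac{\Y}{\psi s_n}\right)\delta=\Y\delta.
\end{equation*}
Since $\Y\delta\leq\psi\Y^{t+1}w_n\leq\psi\Y^{s_n}w_n=\psi w_{n-1}\leq\psi w_k$, and $x\in\uds_\lambda$ lies in some $\lambda w_k$-tube around an admissible line, the point $y'+\tau f$ lies in the concentric $(\lambda+\psi)w_k$-tube around the same line.

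The main obstacle is the high-level case $k>n$: the distance from $y'+\tau f$ to $x$ generally exceeds $\psi w_k$ when $w_k<w_n$, so proximity to $x$ is useless. The structural observation that $l'$ persists as a class-$0$ line at every subsequent level bypasses the need for any quantitative estimate. The low-level case then relies on the precise matching between the $\Y/(\psi s_n)$ term in \eqref{eq:y-y'} and its complement in \eqref{eq:tau}: these are tuned so that the total displacement from $x$ telescopes exactly to $\Y\delta$, which is accommodated inside $\psi w_{n-1}$ via the width recursion \eqref{newwidth} and $t+1\leq s_n$.
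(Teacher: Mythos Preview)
Your proof is correct and follows essentially the same route as the paper: apply Lemma~\ref{basic} to produce $y'$ and $l'$, then verify membership in $\uds_{\lambda+\psi}$ level by level, using that $l'$ persists as a class-$0$ line for $k>n$ and that the total displacement from $x$ is at most $\Y\delta\le\psi w_{n-1}\le\psi w_k$ for $k<n$. The paper packages the three cases into a single sequence $(l_k')_{k\ge1}$ rather than an explicit case split, but the content is identical.
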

\begin{proof}
Choose a sequence of integers $(m_k)_{k\geq1}$ with $0\leq m_k\leq\lambda M_k$, and a sequence $(l_{k})_{k\geq{1}}$ of line segments such that \mbox{$l_{k}\in\mathcal{L}_{(k,m_{k})}$} is a line of level~$k$, class~$m_{k}$ and 
\begin{equation}\label{eq:branch}
x\in\bigcap_{k=1}^{\infty}\overline{B}_{\lambda{w_{k}}}(l_{k}).
\end{equation}
Note that $\Y\delta<\psi w_n\Y^{t+1}\le\psi w_n\Y^{s_n}=\psi w_{n-1}\le
\psi{w_{k}}$ for all $k\leq{n-1}$. This,
together with \eqref{eq:branch}, implies that
\begin{equation}\label{initialtubes}
\overline{B}_{\Y\delta}(x)\subseteq\overline{B}_{(\lambda+\psi)w_{k}}(l_{k})\mbox{
for }1\leq{k}\leq{n-1}.
\end{equation}
Now, the line $l\in\mathcal L_{(n,\myr)}^{(h_{1},\ldots,h_{\myr})}$, the direction $f\in\mathcal{E}_{n}$, the integer $t+1$ and the point $y\in l$
satisfy the conditions of Lemma~\ref{basic}. Therefore,
there exists a line $l'$ of level $n$, class $1+\myr$, category $(h_{1},\ldots,h_{\myr},t+1)$ and a point $y'\in l'$ such that \eqref{eq:lineeq} holds and
\begin{equation}\label{eq:xerror}
\left\|y'-y\right\|\leq\frac{\Y^{t}w_{n}}{s_{n}}=\frac{\Y}{\psi s_{n}}\times{\psi \Y^{t-1}w_{n}}\leq\frac{\Y}{\psi s_{n}}\times{\delta}.
\end{equation}
Recall that $l'$ is a line of level~$n$. Hence, from 
\eqref{linesclass0} we have that $l'$ is a line of level~$k$, class~$0$ for
all $k\geq{n+1}$. We now set
\begin{equation}\label{l's}
{l_{k}}'=l'\mbox{ for all }k\geq{n}
\mbox{ and }{l_{k}}'=l_{k}
\mbox{ for
}1\leq{k}\leq{n-1}.
\end{equation}
Then for each $k\geq{1}$, we have that ${l_{k}}'$ is a line of level~$k$ class~${m_{k}}'$ where
\begin{equation*}
{m_{k}}'= \begin{cases} m_{k} & \mbox{ if }1\leq{k}\leq{n-1}, \\
1+\myr & \mbox{ if }k=n \\
0 & \mbox{ if }k\geq{n+1}.
\end{cases}
\end{equation*}
From $m_{k}\leq\lambda{M_{k}}$ and \eqref{eq:ch} we have that $0\leq{m_{k}}'\leq{(\lambda+\psi)M_{k}}$ for all $k$. Hence, by Definition~\ref{finalset},
\begin{equation}\label{branch2}
\bigcap_{k=1}^{\infty}\overline{B}_{(\lambda+\psi)w_{k}}(l_k')\subseteq{\uds_{\lambda+\psi}}.
\end{equation}
Suppose $\tau$ is a real number satisfying \eqref{eq:tau}
(by \eqref{eq:tnd} we have that $\psi-1/s_n$ is non-negative). As $\psi<1$,
\begin{equation*}
0\le\tau\leq\Y^{t+1}\left(\psi-\frac{1}{s_{n}}\right)w_{n}
\leq
\Y^{t+1}w_{n}.
\end{equation*}
Hence 
$y'+[-1,1]\tau f\subseteq{l'}$ by \eqref{eq:lineeq}.

From \eqref{initialtubes}, \eqref{eq:xerror} and \eqref{eq:tau} we have that
for all $1\leq{k}\leq{n-1}$
\begin{equation*}
y'+[-1,1]\tau f
\subseteq{l'}\cap\overline{B}_{\Y\delta}(x)
\subseteq{l'}\cap\overline{B}_{(\lambda+\psi)w_k}(l_k).
\end{equation*}
Putting this together with \eqref{l's}, we conclude that 
\[
y'+[-1,1]\tau f
\subseteq
\uds_{\lambda+\psi}\cap{l'}
\]
as $l'\subseteq \overline{B}_{(\lambda+\psi)w_{k}}(l')=
\overline{B}_{(\lambda+\psi)w_{k}}(l_{k})
$ for all $k\ge n$.
\end{proof}
The next Lemma represents the crucial step towards our main result Theorem~\ref{thm:main}. 
\begin{lemma}\label{lemma:crucial}
Let $\lambda\in(0,1)$, $\psi\in\bigl(0,1-\lambda\bigr)$ and
$\eta\in(0,1/4)$. Then there exists a real number
\begin{equation}
\delta_{0}=\delta_{0}(\lambda,\psi,\eta)>0
\end{equation}
such that for any $x\in{\uds_{\lambda}}$, $e\in{S^{d-1}}$ and $\delta\in(0,\delta_{0})$, there exists 
$e'\in{S^{d-1}}$, integers $n,t$ and a pair $(x',l')$, consisting of a point and 
a straight line segment, with $x'\in l'\in\mathcal L_{n,\myr}^{(h_{1},\ldots,h_{\myr})}$, satisfying the following properties.

\begin{enumerate}[(i)]
\item Condition \eqref{eq:tnd} of Lemma~\ref{lemma:c1} is satisfied;
\item Condition
\begin{equation}\label{eq:ch:str}
\myr\leq(\lambda+\psi)M_{n}-4\mbox{, }h_{\myr}=t+1
\end{equation}
is satisfied (a stronger version of \eqref{eq:ch});
\item $\left\|x'-x\right\|\leq\eta\delta$, $\left\|e'-e\right\|\leq\eta$ and 
\begin{equation}\label{eq:x+deltae}
x'+[-1,1]\delta e'\subseteq{\uds_{\lambda+\psi}}\cap{l'}.
\end{equation}
Moreover, $\delta_0$ can be chosen to be independent of $\Y\in(1,2)$.
\end{enumerate}
\end{lemma}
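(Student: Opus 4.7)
The plan is to exhibit $(e', n, t, x', l')$ satisfying (i)--(iii) by invoking Lemma~\ref{lemma:c1} on a line derived from the branch of $\uds_\lambda$ approximating $x$, suitably adjusted so that its ending category index is exactly $t+1$. Lemma~\ref{lemma:c1} then furnishes a segment $y'+[-1,1]\delta e'\subseteq \uds_{\lambda+\psi}\cap l'$, which, taking $x':=y'$, is precisely the content of \eqref{eq:x+deltae}.

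Given $\delta\in(0,\delta_0)$, choose the unique $(n,t)$ with $\psi Q^{t-1}w_n<\delta\le\psi Q^t w_n$ and $t\in\{0,\ldots,s_n-1\}$. The threshold $\delta_0=\delta_0(\lambda,\psi,\eta)$ is fixed so that $n$ is large enough to ensure $1/s_n\le\min(\eta,\psi)$, $\psi M_n\ge 6$ and $M_n Q^2/(\psi s_n)\le\eta/3$ (via \eqref{eq:defsk}), and $t$ is large enough that $\lambda Q/(\psi Q^t)\le\eta/3$. Definition~\ref{finalset} supplies $l_n\in\mathcal{L}_{(n,m_n)}^{(j_1,\ldots,j_{m_n})}$ with $m_n\le\lambda M_n$ and $x\in\overline{B}_{\lambda w_n}(l_n)$; let $y\in l_n$ be the orthogonal projection of $x$, so $\|y-x\|\le\lambda w_n$. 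Pick $e'\in\mathcal{E}_n$ with $\|e'-e\|\le 1/s_n\le\eta$ via \eqref{eq:defEk}.

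Since Lemma~\ref{lemma:c1} demands an input line whose last category index is $t+1$, I modify $l_n$ by a case split. If $m_n=0$ or $j_{m_n}=t+1$, keep $l_n$: set $l^*=l_n$, $y^*=y$, $r^*=m_n$. If $j_{m_n}<t+1$, apply Lemma~\ref{approx} with target $i_{m_n}=t+1$ to obtain a parallel $l^*\in\mathcal{L}_{(n,m_n)}^{(\ldots,t+1)}$ and $y^*\in l^*$ with $\|y^*-y\|\le m_n Q^{t+1}w_n/s_n$, and set $r^*=m_n$. If $j_{m_n}>t+1$, apply Lemma~\ref{basic} with direction $e'$ and index $t+1\le j_{m_n}$ to obtain $l^*\in\mathcal{L}_{(n,m_n+1)}^{(\ldots,j_{m_n},t+1)}$ through some $y^*\in l^*\cap l_n$ with $\|y^*-y\|\le Q^{t+1}w_n/s_n$, and set $r^*=m_n+1$. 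In every case $l^*$ ends with category index $t+1$, $r^*\le\lambda M_n+1$, and $\|y^*-y\|\le M_n Q^{t+1}w_n/s_n$. Apply Lemma~\ref{lemma:c1} to $(l^*,y^*,e',r^*,n,t)$; its hypotheses hold since $r^*\le\lambda M_n+1\le(\lambda+\psi)M_n-2$ by $\psi M_n\ge 6$. The lemma yields $l'\in\mathcal{L}_{(n,r^*+1)}^{(\ldots,t+1)}$ of class $r:=r^*+1\le(\lambda+\psi)M_n-4$, ending with category index $t+1$, and $y'\in l'$ with $\|y'-y^*\|\le Q\delta/(\psi s_n)$, satisfying $y'+[-1,1]\tau e'\subseteq \uds_{\lambda+\psi}\cap l'$ whenever $0\le\tau\le(Q-Q/(\psi s_n))\delta-\|y^*-x\|$. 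Setting $x'=y'$ and using $w_n\le Q\delta/(\psi Q^t)$,
\[
\|x'-x\|\le\frac{Q\delta}{\psi s_n}+\frac{M_n Q^{t+1}w_n}{s_n}+\lambda w_n\le\Bigl(\frac{Q}{\psi s_n}+\frac{M_n Q^2}{\psi s_n}+\frac{\lambda Q}{\psi Q^t}\Bigr)\delta\le\eta\delta,
\]
each summand at most $\eta/3$ by the choice of $\delta_0$. The same estimate yields $\|y^*-x\|\le(Q-1)\delta/2$, whereas $Q-1-Q/(\psi s_n)\ge(Q-1)/2$ for $n$ large, so $\tau=\delta$ is admissible and \eqref{eq:x+deltae} follows, completing (i)--(iii).

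The main obstacle is the calibration of $\delta_0$. For $\delta<\delta_0$ the natural level $n$ is automatically large, but the associated $t$ can \emph{a priori} be as small as $0$ when $\delta$ sits near the lower endpoint of a level-$n$ range, causing $\lambda Q/(\psi Q^t)$ to violate its $\eta/3$ bound. The growth conditions \eqref{eq:defsk}--\eqref{eq:defsko}, which pack consecutive levels densely via $\tilde s_k-\tilde s_{k-1}=o(s_k)$, are what permit a choice of $\delta_0$ ensuring both $n$ and $t$ are suitably large for every $\delta<\delta_0$; the category-adjustment case analysis is routine in spirit but requires careful interleaving of the lemma applications.
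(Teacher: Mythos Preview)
Your argument has a genuine gap, and you have in fact identified it yourself: the term $\lambda w_n$ in your error estimate cannot be controlled when $t$ is small. Your proposed fix---choosing $\delta_0$ so that $t$ is always large enough to make $\lambda Q/(\psi Q^t)\le\eta/3$---is impossible. The intervals $(\psi Q^{t-1}w_n,\psi Q^t w_n]$ for $t=0,\ldots,s_n-1$ tile $(\psi Q^{-1}w_n,\psi w_{n-1}]$, and the $t=0$ intervals $(\psi Q^{-1}w_n,\psi w_n]$ accumulate at $0$. Hence for \emph{every} $\delta_0>0$ there are $\delta<\delta_0$ with $t=0$, and then $\lambda w_n\ge\lambda\delta/\psi$, which for generic $\lambda,\psi,\eta$ is far larger than $\eta\delta/3$. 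The conditions \eqref{eq:defsk}--\eqref{eq:defsko} are used solely in the dimension estimate and say nothing about a lower bound on $t$; your appeal to them here is unfounded. The same uncontrolled $\lambda w_n$ term also breaks your verification that $\tau=\delta$ satisfies \eqref{eq:tau}.

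The paper's proof handles exactly this difficulty, and the device is the step you omitted. Writing $x=z+\alpha g$ with $z\in l_n$ and $0\le\alpha\le\lambda w_n$, the paper does \emph{not} project to $z$; instead it picks $g'\in\mathcal E_n$ with $\|g'-g\|\le 1/s_n$ and uses Lemma~\ref{basic} to create a short line $l'''=z'+[-1,1]Qw_ng'$ (last category index~$1$), and sets $x'''=z'+\alpha g'\in l'''$. The point is that
\[
\|x'''-x\|\le\|z'-z\|+\alpha\|g'-g\|\le \frac{Qw_n}{s_n}+\frac{\lambda w_n}{s_n}\le\frac{2Qw_n}{s_n},
\]
so the dangerous $\lambda w_n$ acquires the factor $1/s_n$. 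From $l'''$ one then applies Lemma~\ref{approx} (when $t\ge1$) to raise the last category to $t+1$, followed by Lemma~\ref{lemma:c1}; all remaining errors are of order $(M_n+4)Q^{t+1}w_n/s_n\le (M_n+4)Q^2\delta/(\psi s_n)$, which \emph{is} controllable via \eqref{eq:defsk}. (The paper also first proves the weaker inclusion with $\delta/2$ in place of $\delta$ and recovers the full statement by halving; your attempt to get $\tau=\delta$ directly runs into the same $\lambda w_n$ obstruction.)
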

\begin{proof}
We will find $\delta_0'=\delta_0'(\lambda,\psi,\eta)$ 
such that for any $x\in \uds_{\lambda}$, $e\in S^{d-1}$ and $\delta\in(0,\delta_{0}')$, conclusions (i), (ii) and (iii) of Lemma \ref{lemma:crucial} are valid when
\eqref{eq:x+deltae} is replaced by a weaker statement
\begin{equation}\label{eq:x+deltae:we}
x'+[-1,1]\frac\delta2{e'}\subseteq{\uds_{\lambda+\psi}}\cap{l'}.
\end{equation}
Then, defining $\delta_0=
\displaystyle\frac12\delta_0'(\lambda,\psi,\eta/2)$, we will get that the conclusion of this lemma including
\eqref{eq:x+deltae}  is satisfied.

Since $(w_{k})_{k\geq{1}}$ is strictly decreasing, and 
the sequences~$(s_{k})$ and $(M_{k})$ satisfy $s_k,M_k\to\infty$, $s_k/M_k\to0$
by \eqref{eq:defsk},
we may choose 
$\displaystyle\delta_0'\in(0,\frac{\psi}{2}w_{1})$ 
small enough so that
whenever $\psi{w_{k}}\leq2\delta_0'$ we have
\[\frac{1}{s_{k}}\leq\min\left\{\eta,\psi\right\},
\qquad
\frac{M_{k}+4}{s_{k}}\leq\frac{\eta\psi}{4},
\qquad
\psi M_k\geq 6.\]
As $Q\in(1,2)$, this implies that
whenever $\psi{w_{k}}\leq\Y\delta_0'$ we have
\begin{equation}\label{eq:delta0}
\frac{1}{s_{k}}\leq\min\left\{\eta,\psi\right\},
\qquad
\frac{M_{k}+4}{s_{k}}\leq\frac{\eta\psi}{\Y^2},
\qquad
\psi M_k\geq 6.
\end{equation}

Let $x\in{\uds_{\lambda}}$ and fix $\delta\in(0,\delta_0')$. Choose a sequence of integers $(m_k)_{k\geq1}$ with $0\leq m_k\leq\lambda M_k$ and a sequence $(l_{k})_{k\geq{1}}$ of line segments such that \mbox{$l_{k}\in\mathcal{L}_{(k,m_{k})}$} is a line of level~$k$, class~$m_{k}$ and 
\begin{equation*}
x\in\bigcap_{k=1}^{\infty}\overline{B}_{\lambda{w_{k}}}(l_{k}).
\end{equation*}
Note that $\Y\delta<\psi w_1$ as $\Y<2$.
Since $w_k\to0$, there is 
a unique natural number $n\ge2$ satisfying 
\begin{equation}\label{eq:n}
\psi{w_{n}}\leq{\Y\delta}<\psi{w_{n-1}}.
\end{equation}
Using \eqref{eq:n} and $w_{n-1}=\Y^{s_{n}}w_{n}$, we can find $t\in\left\{0,1,\ldots,s_{n}-1\right\}$ satisfying
\begin{equation*}
\psi\Y^{t}w_{n}\leq{\Y\delta}<\psi\Y^{t+1}w_{n}.
\end{equation*}
Further, from \eqref{eq:delta0}, $\delta\in(0,\delta_{0})$ and \eqref{eq:n} we have that $1/s_{n}\leq\psi$. Hence $\delta$, $n$ and $t$ satisfy \eqref{eq:tnd}. By \eqref{eq:defEk}
there exists a direction $e'\in\mathcal{E}_{n}$ such that $\left\|e'-e\right\|\leq 1/s_{n}$, whilst $1/s_{n}\leq \eta$ follows from \eqref{eq:delta0}, $\delta\in(0,\delta_0')$ and \eqref{eq:n}. Hence, we have $\left\|e'-e\right\|\leq \eta$ as required.

Note that $\overline{B}_{w_{n}}({l_{n}})$ is a tube of level~$n$, class~$m_{n}$, containing the point $x$. Let the line ${l_{n}}$ have category~$(j_{1},\ldots,j_{m_{n}})$. 
We can write $x=z+\alpha g$ where 
$z\in l_n$, $g\in{S^{d-1}}$ and
$\alpha\in[0,\lambda w_{n}]$.
Next, using \eqref{eq:defEk}, 
pick $g'\in\mathcal{E}_{n}$ such that $\left\|g'-g\right\|\leq 1/s_{n}$. 
Apply now Lemma~\ref{basic} to $z\in l_n$ to find 
a line 
\[
l'''=z'+[-1,1]\Y w_ng'\in\mathcal L_{(k,1+m_n)}^{(j_{1},\ldots,j_{m_{n}},1)},
\] 
where
$z'\in l_n$ and $\|z'-z\|\le \Y w_n/s_n$. Let
$x'''=z'+\alpha g'$; then  
\begin{multline}\label{eq:x'''}
\left\|x'''-x\right\|\leq
\|z'-z\|+\alpha\|g'-g\|\\
\le
\frac{2\Y w_{n}}{s_{n}}\leq\frac{2\Y}{\psi s_{n}}\times\psi \Y^{t}w_{n}\leq\frac{2\Y^{2}}{\psi s_{n}}\times\delta.
\end{multline}

From \eqref{eq:delta0}, $\delta\in(0,\delta_0')$ and \eqref{eq:n}, we have $\psi{M_{n}}\geq{6}$. In particular, 
\begin{equation*}\label{eq:classbound}
m_{n}+2\leq \lambda{M_n}+2\leq(\lambda+\psi)M_{n}-4,
\end{equation*} 
and \eqref{eq:ch:str} is satisfied when $\myr=m_{n}+2$ and $h_{\myr}=t+1$.

We will now show that there exists a line $l'$ of level~$n$, class~$2+m_{n}$ category~$(j_{1},\ldots,j_{1+m_{n}},t+1)$, and a point $x'\in{l'}$ such that 
\begin{equation}\label{x'-x'''}
\left\|x'-x'''\right\|\leq\frac{(m_{n}+2)\Y^{2}}{\psi s_n}\times\delta
\mbox{ and }
x'+[-1,1]\frac\delta2{e'}\subseteq{\uds_{\lambda+\psi}\cap{l'}}.
\end{equation}
Once \eqref{x'-x'''} is established, the proof is completed by combining \eqref{x'-x'''} and \eqref{eq:x'''} to get
\begin{equation}
\left\|x'-x\right\|\leq\frac{(m_{n}+4)\Y^{2}}{\psi s_{n}}\times\delta\leq\eta\delta,
\end{equation}
where the final inequality follows from \eqref{eq:delta0}, $\delta\in(0,\delta_0')$ and \eqref{eq:n}.

Thus, it only remains to verify \eqref{x'-x'''}. We distinguish two cases; namely the case $t=0$ and the case $t\geq{1}$.

If $t=0$ then the conditions of Lemma~\ref{lemma:c1} are satisfied for $\lambda$, $\psi$, $x$, $\delta$, $t$, $n$, $f=e'$, $l=l'''$, $\myr=1+m_{n}$, $(h_{1},\ldots,h_{\myr})=(j_{1},\ldots,j_{1+m_{n}})$ and $y=x'''\in{l'''}$. Therefore, by Lemma~\ref{lemma:c1}, there exists a line $l'$ of level~$n$, class~$2+m_{n}$ category~$(j_{1},\ldots,j_{1+m_{n}},1)$ and point $x'\in{l'}$ such that
\begin{align}
&\left\|x'-x'''\right\|\leq\frac{\Y}{\psi s_{n}}\times\delta\leq\frac{(m_{n}+2)\Y^{2}}{\psi s_{n}}\times\delta\mbox{ and,} \label{eq:x'1} \\
&x'+[-1,1]\tau e'\subseteq{\uds_{\lambda+\psi}\cap{l'}}\mbox{ whenever }
0\leq
\tau\leq(\Y-\frac{\Y}{\psi s_{n}})\delta-\left\|x'''-x\right\|. \label{eq:itvl}
\end{align}
 From \eqref{eq:delta0}, $\delta\in(0,\delta_{0}')$ and \eqref{eq:n} we can deduce that
 \begin{equation}\label{eq:etabounds}
 \frac{\Y^{2}}{\psi s_{n}}\leq\frac{\eta}{3}\mbox{ and }\frac{\Y^{2}(m_{n}+4)}{\psi s_{n}}\leq\eta.
 \end{equation}
Therefore, using $\eta<1/4$, \eqref{eq:etabounds} and \eqref{eq:x'''} 
we get 
 \begin{equation*}
 (\Y-\frac{\Y}{\psi s_{n}})\delta-\left\|x'''-x\right\|\geq\left(\Y-\frac{\eta}{3}-{\eta}\right)\delta\geq\frac\delta2. 
 \end{equation*}
 Hence, by \eqref{eq:itvl} we have $x'+[-1,1]\frac\delta2e'\subseteq{\uds_{\lambda+\psi}\cap{l'}}$, and we obtain \eqref{x'-x'''}.

Now assume that we are in the remaining case, $t\geq{1}$. Set $i_{1+m_n}=t+1$, so that $j_{1+m_n}=1<i_{1+m_n}\leq s_n$. Observe that the line $l'''\in\mathcal{L}_{(n,1+m_{n})}^{(j_{1},\ldots,j_{1+m_{n}})}$, the integer $i_{1+m_n}>j_{1+m_n}$ and the point $x'''\in{l'''}$ satisfy the conditions of Lemma~\ref{approx}. Therefore, by Lemma~\ref{approx}, there exists an integer sequence $s_k\geq i_1\geq\ldots\geq i_{1+m_n}\geq1$ together with a line
$l''$ of level~$n$, class~$1+m_n$, category~$(i_{1},\ldots,i_{1+m_n})$
such that $l''$ is parallel to $l'''$ and there exists a point $x''\in{l''}$ with
\begin{equation}\label{x''-x'''}
\left\|x''-x'''\right\|\leq\frac{(1+m_n)\times{\Y^{t+1}w_{n}}}{s_{n}}.
\end{equation}
Set $i_{2+m_n}=t+1$, so that $i_{2+m_n}=i_{1+m_n}$. Note that the conditions of Lemma~\ref{lemma:c1} are satisfied for $\lambda$, $\psi$, $x$, $\delta$, $t$, $n$, $f=e'$, $l=l''$, $\myr=1+m_{n}$, $(h_{1},\ldots,h_{\myr})=(i_{1},\ldots,i_{1+m_{n}})$ and $y=x''\in{l''}$. Hence, by Lemma~\ref{lemma:c1}, there exists a line segment $l'$ of
level~$n$, class~$2+m_n$, category~$(i_{1},\ldots,i_{1+m_n},t+1)$ and a point $x'\in{l'}$ with
\begin{align}
&\left\|x'-x''\right\|\leq\frac{\Y}{\psi s_{n}}\times\delta\mbox{ and,} \label{x'-x''} \\
&x'+[-1,1]\tau e'\subseteq{\uds_{\lambda+\psi}\cap{l'}}\mbox{ whenever }
0\leq
\tau\leq\left(\Y-\frac{\Y}{\psi s_{n}}\right)-\left\|x''-x\right\|. \label{eq:itvl2}
\end{align}
We observe that 
\begin{equation*}
\left\|x'-x'''\right\|\leq\frac{(m_{n}+2)\Y^{t+1}w_{n}}{s_{n}}\leq\frac{(m_{n}+2)\Y^{2}}{\psi s_{n}}\times\delta,
\end{equation*}
using \eqref{x'-x''}, \eqref{x''-x'''} and \eqref{eq:n}. Moreover, combining \eqref{x''-x'''} with \eqref{eq:x'''} yields
\begin{equation*}
\left\|x''-x\right\|\leq\frac{(3+m_{n})\Y^{t+1}w_{n}}{s_{n}}\leq\frac{(m_{n}+3)\Y^{2}}{\psi s_{n}}\times\delta.
\end{equation*}
Therefore, by \eqref{eq:etabounds} and $\eta<1/4$,
\begin{equation*}
\left(\Y-\frac{\Y}{\psi s_{n}}\right)\delta-\left\|x''-x\right\|\geq\left(\Y-\frac{(m_{n}+4)\Y^{2}}{\psi s_{n}}\right)\delta\geq(\Y-\eta)\delta\geq\frac\delta2.
\end{equation*}
We conclude, using \eqref{eq:itvl2} that 
$x'+[-1,1]\frac\delta2 e'\subseteq \uds_{\lambda+\psi}\cap l'$. We have now verified \eqref{x'-x'''}.
\end{proof}
\begin{lemma}\label{lemma:c3}
Let $\lambda\in[0,1)$, $\psi\in(0,1-\lambda)$ and $\eta\in(0,1)$. Then there exists a number
\begin{equation*}
\delta_{1}=\delta_{1}(\lambda,\psi,\eta)>0
\end{equation*}
such that whenever $x\in{\uds_{\lambda}}$, $\delta\in(0,\delta_{1})$ and $v_{1},v_{2},v_{3}$ are in the closed unit ball in $\mathbb{R}^{d}$, there exist ${v_{1}}',{v_{2}}',{v_{3}}'\in\mathbb{R}^{d}$ such that 
\begin{align}
&\left\|{v_{i}}'-v_{i}\right\|\leq\eta\mbox{ and}  \label{c3a} \\
&[x+\delta{v_{1}}',x+\delta{v_{3}}']\cup[x+\delta{v_{3}}',x+\delta{v_{2}}']\subseteq{\uds_{\lambda+\psi}}. \label{c3b}
\end{align}
Moreover, $\delta_1$ can be chosen to be independent on $\Y\in(1,2)$.
\end{lemma}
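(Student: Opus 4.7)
I plan to construct the configuration in two stages: I will apply Lemma~\ref{lemma:crucial} once to obtain an ``apex line'' $l_3$ carrying a point near $x+\delta v_3$, and then apply Lemma~\ref{lemma:c1} twice from that apex to produce the two legs in directions close to $v_1-v_3$ and $v_2-v_3$. Concretely, I split $\psi=\psi_1+\psi_2$ with $\psi_1=\psi/4$ and $\psi_2=3\psi/4$. Applying Lemma~\ref{lemma:crucial} to $(x,v_3,\delta)$ with $(\lambda,\psi_1,\eta')$ for a small $\eta'\le\eta/8$ yields integers $n,t,r$, a line $l_3\in\mathcal L_{(n,r)}^{(h_1,\dots,h_r)}$ with $h_r=t+1$ and $r\le(\lambda+\psi_1)M_n-4$, a point $x_3\in l_3$ and a direction $e_3$ satisfying $\|x_3-x\|\le\eta'\delta$, $\|e_3-v_3\|\le\eta'$, $x_3+[-1,1]\delta e_3\subseteq \uds_{\lambda+\psi_1}\cap l_3$, together with condition \eqref{eq:tnd} for $\psi_1$. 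I set $A:=x_3+\delta e_3\in l_3\cap \uds_{\lambda+\psi_1}$; then $\|A-(x+\delta v_3)\|\le 2\eta'\delta$. (If $\lambda=0$, I first transfer a tiny part of~$\psi$ into~$\lambda$ to meet the hypothesis $\lambda>0$ of Lemma~\ref{lemma:crucial}.)

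For $i=1,2$ I pick $f_i\in\mathcal E_n$ within $1/s_n$ of $(v_i-v_3)/\|v_i-v_3\|$ and apply Lemma~\ref{lemma:c1} \emph{at}~$A\in \uds_{\lambda+\psi_1}$ (playing the role of that lemma's~$x$), with $\psi_2$ in place of~$\psi$, $y=A$, the same $n,l_3,r,(h_1,\dots,h_r),t$, new input $\delta_*:=\psi_2\Y^t w_n$ (which satisfies \eqref{eq:tnd} with~$\psi_2$), and direction~$f_i$. The hypothesis $r\le(\lambda+\psi)M_n-2$ follows from $r\le(\lambda+\psi_1)M_n-4$. The apex $y'\in l_3$ supplied by Lemma~\ref{basic} (through which Lemma~\ref{lemma:c1} proceeds) is a point of a maximal $\tfrac{\Y^{t+1}w_n}{s_n}$-separated subset~$\Phi$ of~$l_3$; since by Definition~\ref{def:R} such~$\Phi$ depends only on the pair $(l_3,t+1)$ and not on the direction, both applications can be arranged to return the \emph{same}~$y'$. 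I thereby obtain lines $l_1',l_2'\ni y'$ in directions $f_1,f_2$ together with segments $y'+[-1,1]\tau_i f_i\subseteq \uds_{\lambda+\psi}$ whenever $0\le\tau_i\le(\Y-\tfrac{\Y}{\psi_2 s_n})\delta_*$. Setting $v_3':=(y'-x)/\delta$ and $v_i':=v_3'+\|v_i-v_3\|f_i$ identifies the desired segments as $y'+[0,\delta\|v_i-v_3\|]f_i$, contained in the Lemma~\ref{lemma:c1} segments provided $\delta\|v_i-v_3\|\le(\Y-\tfrac{\Y}{\psi_2 s_n})\delta_*$. The direction bounds $\|v_3'-v_3\|$ and $\|v_i'-v_i\|\le\|v_3'-v_3\|+2/s_n$ follow by triangle inequality using $\|y'-A\|\le\Y^{t+1}w_n/s_n$ and $\|A-x-\delta v_3\|\le 2\eta'\delta$, and both are at most~$\eta$ once $\eta'\le\eta/8$ and $s_n$ is sufficiently large.

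The main technical obstacle, which dictates the ratio $\psi_2/\psi_1=3$, is precisely this length bound for $\tau_i$. Taking Lemma~\ref{lemma:c1}'s input~$x$ to be the apex~$A$ rather than the original~$x$ makes the $\|y-x\|$ contribution vanish, leaving only $(\Y-\tfrac{\Y}{\psi_2 s_n})\delta_*$. The constraint $h_r=t+1$ forces $\delta_*$ and $\delta$ to correspond to the same integer~$t$ in \eqref{eq:tnd}, so $\delta_*\le\psi_2\Y^t w_n$ and $\delta\le\psi_1\Y^t w_n$, whence $\delta_*\ge(\psi_2/\psi_1)\delta=3\delta$. Consequently $(\Y-\tfrac{\Y}{\psi_2 s_n})\delta_*\ge 3\Y(1-1/(\psi_2 s_n))\delta>(3-3/(\psi_2 s_n))\delta$, which exceeds $2\delta\ge\delta\|v_i-v_3\|$ as soon as $\psi_2 s_n>3$, i.e.\ $s_n>4/\psi$, uniformly in~$\Y\in(1,2)$. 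Since all thresholds here (including Lemma~\ref{lemma:crucial}'s $\delta_0$) depend only on $\lambda,\psi,\eta$, the resulting $\delta_1$ is independent of~$\Y$.
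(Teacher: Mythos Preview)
Your approach is essentially correct and takes a genuinely different route from the paper's proof. The paper first reduces (without loss of generality) to the case where $0<\|v_i\|\le c$ for a small constant $c$ with $a+2b+3c<\tfrac12$; this forces all three target points into a ball of radius $c\delta$ about $x$, so the segments to be produced are short compared with the $\tau$-bound in Lemma~\ref{lemma:c1}. The paper then applies Lemma~\ref{lemma:crucial} once (in the direction $v_1/\|v_1\|$) and Lemma~\ref{lemma:c1} twice along a \emph{path} $v_1\to v_3\to v_2$, each time keeping the original point $x$ as the centre in Lemma~\ref{lemma:c1}. Your scheme avoids the rescaling of the $v_i$ altogether: by splitting $\psi=\psi_1+\psi_2$ with $\psi_2/\psi_1=3$ and invoking Lemma~\ref{lemma:c1} at the apex $A\in\uds_{\lambda+\psi_1}$ with the larger input $\delta_*=\psi_2\Y^t w_n\ge 3\delta$, you create enough room in the $\tau$-bound to accommodate legs of length up to $2\delta$. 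Your observation that the base point $y'$ supplied through Lemma~\ref{basic} depends only on $(l_3,t+1,A)$ and not on the direction $f_i$---because the separated set $\Phi$ in Definition~\ref{def:R} is fixed independently of $e$---is correct and is what pins both legs at a common vertex. Each strategy has its merits: the paper's scaling trick keeps the bookkeeping with $\psi$ and $\delta$ uniform, while yours handles arbitrary $v_i$ in the unit ball directly without a preliminary reduction.

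There is one slip to repair. Lemma~\ref{lemma:crucial} takes a \emph{unit} direction $e\in S^{d-1}$ and returns $e'\in S^{d-1}$ with $\|e'-e\|\le\eta'$; feeding in a general $v_3\in\overline{B}_1(0)$ and asserting $\|e_3-v_3\|\le\eta'$ is illegitimate when $\|v_3\|\ne 1$, and then $A=x_3+\delta e_3$ need not be close to $x+\delta v_3$. The fix is standard: for $v_3\ne 0$ apply Lemma~\ref{lemma:crucial} with $e=v_3/\|v_3\|$ and set $A=x_3+\|v_3\|\delta e_3$, so that $A\in x_3+[-1,1]\delta e_3\subseteq l_3\cap\uds_{\lambda+\psi_1}$ and $\|A-(x+\delta v_3)\|\le 2\eta'\delta$ as claimed; for $v_3=0$ take any unit direction and $A=x_3$. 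You should likewise note that the degenerate cases $v_i=v_3$ are handled by a tiny perturbation (the paper, after its rescaling, simply assumes the $v_i$ are distinct and nonzero). Finally, the clause ``so $\delta_*\le\psi_2\Y^t w_n$ and $\delta\le\psi_1\Y^t w_n$, whence $\delta_*\ge 3\delta$'' is phrased backwards: two upper bounds do not yield the desired inequality; what you actually use is the \emph{equality} $\delta_*=\psi_2\Y^t w_n$ (your own choice) together with $\delta\le\psi_1\Y^t w_n$.
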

\begin{proof}
Fix positive numbers $\mya,\myb,\myc$ such that 
\begin{equation}\label{eq:myabc}
\mya+2\myb+3\myc<\frac12.
\end{equation}

Using the notation of Lemma~\ref{lemma:crucial}, choose $0<\delta_{1}\le\delta_{0}\left(\lambda,{\psi},\mya\eta\right)$  
such that
\[
\frac{2}{\psi s_{k}}\leq\myb{\eta}\mbox{ whenever }\psi{w_{k}}<2\delta_{1} 
\]
implying that 
\begin{equation}
\frac{\Y}{\psi s_{k}}\leq\myb{\eta}\mbox{ whenever }\psi{w_{k}}<\Y\delta_{1} \label{eq:del1b}
\end{equation}
as $\Y\in(1,2)$.

Fix $x\in{\uds_{\lambda}}$, $\delta\in(0,\delta_{1})$ and $v_{1},v_{2},v_{3}$ in the closed unit ball in $\mathbb{R}^{d}$. We may assume that 
\begin{equation}\label{eq:vse1}
0<\left\|v_{i}\right\|\leq \myc\mbox{ for each }i=1,2,3
\end{equation}
and $v_1,v_2,v_3$ are distinct vectors.

Set $e_1=v_{1}/\|v_{1}\|$. Since $\delta<\delta_{0}(\lambda,{\psi},\mya\eta)$, Lemma~\ref{lemma:crucial} asserts that there exists  $e_{1}'\in{S^{d-1}}$,
integers $n,t$  
and $x'\in l'\in\mathcal L_{(n,\myr)}^{(h_1,\dots,h_\myr)}$ 
such that \eqref{eq:tnd} and \eqref{eq:ch:str} are satisfied, together with
\begin{equation}\label{eq:x'e1'}
\left\|x'-x\right\|\leq\mya\eta\delta\mbox{, }
\left\|{e_{1}}'-e_{1}\right\|\leq\mya{\eta}\mbox{ and }
x'+[-1,1]\delta e_1'\subseteq \uds_{\lambda+\psi}\cap l'.
\end{equation}
Denote $l_1:=l'$ and 
set 
\begin{equation}\label{eq:x1e2}
x_{1}=x'+\delta\left\|v_{1}\right\|{e_{1}}'\mbox{ and }
e_{3}=(v_{3}-v_{1})/\left\|v_{3}-v_{1}\right\|.
\end{equation}
Let $e_3'\in\mathcal E_n$ be such that $\|e_3'-e_3\|\le1/{s_n}$. We note that  
\eqref{eq:del1b} implies $1/s_n\le\Y/(\psi s_n)\le\myb\eta$, as by \eqref{eq:tnd}
we have $\psi w_n\le  \psi\Y^t w_n<\Y\delta<\Y\delta_1$.
We now apply Lemma~\ref{lemma:c1} to point $x\in \uds_\lambda$, integers 
$n,t$ found above, $\delta$ satisfying \eqref{eq:tnd}, $f:=e_3'$,
$y:=x_1\in[x',x'+\delta e_1']\subseteq l_1\in\mathcal L_{(n,\myr)}^{(h_1,\dots,h_\myr)}$. Let the point $y'\in l_1$ and the line
$l_1'\in\mathcal L_{(n,1+\myr)}^{(h_1,\dots,h_\myr,h_\myr)}$ be given by the conclusion of
Lemma~\ref{lemma:c1}. 

We now define $x_1'=y'$ and 
note that \eqref{eq:y-y'} and \eqref{eq:del1b} imply
\[\|x_1'-x_1\|=
\|y'-x_1\|
\le\myb{\eta}\delta,\] 
so that using \eqref{eq:vse1}
we get $\|x_1'-x'\|\le(\myb\eta+\myc)\delta$. 

We claim that
the straight line segment $[x_1'-\frac12\delta e_3',x_1'+\frac12\delta e_3']$
is inside $\uds_{\lambda+\psi}$.
Indeed, we verify that $\tau=\delta/2$ satisfies \eqref{eq:tau}.
Using 
\begin{equation}\label{eq:x1-x}
\|x_1-x\|
\le\|x_1-x'\|+\|x'-x\|
\le
(\myc+\mya\eta)\delta
\end{equation}
and $\Y>1$, together with \eqref{eq:myabc} and $0<\eta<1$, we get
\[
(\Y-\frac\Y{\psi s_n})\delta-\|x_1-x\|\ge
(1-\myb\eta)\delta-(\myc+\mya\eta)\delta
>\frac\delta2.
\]

Let
\[x_3=x_1'+\delta\|v_3-v_1\| e_3'.\]
Denote $l_3=l_1'$ and set $e_2=(v_2-v_3)/\|v_2-v_3\|$.
Find $e_2'\in\mathcal E_n$ with $\|e_2'-e_2\|\le\frac1{s_n}$ and apply
Lemma~\ref{lemma:c1} to point $x\in \uds_\lambda$, $n,t$ and $\delta$ satisfying \eqref{eq:tnd} and found earlier, $f:=e_2'$,
$y:=x_3\in[x_1',x_1'+\delta e_3']\subseteq l_3\in\mathcal L_{(n,1+\myr)}^{(h_1,\dots,h_\myr,h_\myr)}$. 
We note that the condition \eqref{eq:ch}
of Lemma~\ref{lemma:c1}  is satisfied for $\myr+1$ instead of $\myr$ because of
\eqref{eq:ch:str}.
Let the point $y'\in l_3$ and the line
$l_2\in\mathcal L_{(n,2+\myr)}^{(h_1,\dots,h_\myr,h_\myr,h_\myr)}$ be given by the conclusion of
Lemma~\ref{lemma:c1}. 

Let $x_3'=y'$.
We now verify that $[x_3'-\frac12\delta e_2',x_3'+\frac12\delta e_2']\subseteq \uds_{\lambda+\psi}$. 
We again show that
$\tau=\delta/2$ satisfies \eqref{eq:tau}.
Indeed using \eqref{eq:x1-x} we get
\begin{multline*}
\|x_3-x\|\le\|x_3-x_1'\|+\|x_1'-x_1\|+\|x_1-x\|\\
\le
2\myc\delta+\myb\eta\delta+(\myc+\mya\eta)\delta
=
(\mya\eta+\myb\eta+3\myc)\delta.
\end{multline*}
Hence, using \eqref{eq:myabc} and $0<\eta<1$, we conclude
\[
(\Y-\frac\Y{\psi s_n})\delta-\|x_3-x\|\ge
(1-\myb\eta)\delta-(\mya\eta+\myb\eta+3\myc)\delta
>\frac\delta2.
\]

Finally,
define 
\[x_2'=x_3'+\|v_2-v_3\|\delta e_2'.\]
We are now left to see that
$v_i'$, $i=1,2,3$ defined according to
\begin{equation}\label{eq:defv'}
x+\delta v_i'=x_i'
\iff
v_i'=(x_i'-x)/\delta
\end{equation}
satisfy the conclusions of Lemma~\ref{lemma:c3}.

Indeed, let us verify $[x_1',x_3']\cup[x_3',x_2']\subseteq \uds_{\lambda+\psi}$.
First we see that $x_3'\in l_3$ and by \eqref{eq:myabc}
\[
\|x_3'-x_1'\|\le\|x_3'-x_3\|+\|x_3-x_1'\|
\le
\myb\eta\delta+2\myc\delta
<\delta/2,
\]
hence
$[x_1',x_3']\subseteq[x_1'-\frac12\delta e_3',x_1'+\frac12\delta e_3']\subseteq \uds_{\lambda+\psi}$.
For the second straight line segment we see that 
$\|x_2'-x_3'\|\leq2\myc\delta$ and $x_2'\in l_2$ so that 
$[x_3',x_2']\subseteq [x_3'-\frac12\delta e_2',x_3'+\frac12\delta e_2']\subseteq \uds_{\lambda+\psi}$.

By \eqref{eq:defv'} we see that \eqref{c3a} is equivalent to 
\[
\|(x_i'-x_i)-\delta v_i\|\le\eta\delta
\mbox{ for all }
i=1,2,3.
\]
We note first that using 
\eqref{eq:myabc}
\begin{multline*}
\|(x_1'-x)-\delta v_1\|
\le 
\|(x_1-x')-\delta v_1\|+\|x_1'-x_1\|+\|x'-x\|\\
\le 
\myc\delta\|e_1'-e_1\|+(\mya+\myb)\eta\delta
\le(\mya+\myb+\mya\myc)\eta\delta
<\eta\delta.
\end{multline*}
Next, 
\begin{multline*}
\|(x_3'-x)-\delta v_3\|
\le
\|x_3'-x_3\|+\|(x_3-x_1')-\delta(v_3-v_1)\|+\|(x_1'-x)-\delta v_1\|\\
\le
\myb\eta\delta+\delta\|v_3-v_1\|\times\|e_3'-e_3\|+(\mya+\myb+\mya\myc)\eta\delta
\\
\le (\mya+2\myb+\mya\myc+2\myb\myc)\eta\delta<\eta\delta
\end{multline*}
using $\|v_3-v_1\|\le2\myc$ and $\|e_3'-e_3\|\le\myb\eta$.
Finally, using the definition of $x_2'$, we get
\begin{multline*}
\|(x_2'-x)-\delta v_2\|
=
\|(x_3'-x)+\delta\|v_2-v_3\|e_2'-\delta v_2\|\\
\le
\|(x_3'-x)+\delta(v_2-v_3)-\delta v_2\|
+\delta\|v_2-v_3\|\times\|e_2'-e_2\|\\
=\|(x_3'-x)-\delta v_3\|+\delta\|v_2-v_3\|\times\|e_2'-e_2\|
\le (\mya+2\myb+\mya\myc+4\myb\myc)\eta\delta
<\eta\delta
\end{multline*}
as $\mya+2\myb+\mya\myc+4\myb\myc<2(\mya+2\myb+3\myc)<1$.
\end{proof}

We are now ready to prove our main result.
\begin{theorem}\label{thm:main}
For every $d\ge1$,
there exists a compact subset $S\subseteq\mathbb{R}^{d}$ of upper Min\-kow\-ski dimension one with the universal differentiability property.  Moreover if $g:\mathbb{R}^{d}\to\mathbb{R}$ is Lipschitz, the set of points $x\in{S}$ such that $g$ is Fr\'{e}chet differentiable at $x$ is a dense subset of $S$.
\end{theorem}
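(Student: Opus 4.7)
The proof is an assembly of the pieces already established in Sections~\ref{sec:2} and~\ref{sec:3}. For $d = 1$ the theorem is trivial: the interval $S = [0,1]$ is compact, has Minkowski dimension one, and every Lipschitz $g : \mathbb{R} \to \mathbb{R}$ is differentiable on a dense subset by Rademacher's theorem.

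For $d \ge 2$, the plan is to take $(\uds_{\lambda})_{\lambda \in [0,1]}$ to be the family from Definition~\ref{finalset} and to define $S = \overline{\bigcup_{q<1} \uds_{q}}$. I would verify the hypotheses of Theorem~\ref{thm:udscrit}: monotonicity and closedness of $(\uds_{\lambda})$ were observed immediately after Definition~\ref{finalset}, and the required approximation parameter $\delta_{1}(\lambda,\psi,\eta)$ is delivered by Lemma~\ref{lemma:c3}. Theorem~\ref{thm:udscrit} then simultaneously gives that $S$ is a universal differentiability set and the stronger conclusion that for every $y \in S$, every $\rho > 0$ and every Lipschitz $g$, there exists $x \in S$ with $\|x-y\| < \rho$ at which $g$ is Fr\'{e}chet differentiable. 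This is precisely the density statement in the theorem.

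For the Minkowski dimension, I would argue that $S \subseteq \uds_{1}$ because $\uds_{q} \subseteq \uds_{1}$ for every $q < 1$ and $\uds_{1}$ is closed; monotonicity of upper Minkowski dimension together with Lemma~\ref{lemma:dimension} then gives $\overline{\dim}_{M}(S) \le 1$, while the reverse inequality is immediate since $S$ contains the line segment $l_{1}$. Compactness follows because $\uds_{1}$ is closed and bounded: the inductive construction in Section~\ref{sec:3} confines every line of every level, and therefore every tube, to a fixed bounded neighbourhood of $l_{1}$, thanks to the geometric decay $w_{k} = \Y^{-s_{k}} w_{k-1}$ with $s_{k} \ge 3$, which makes $\sum_{k} w_{k}$ finite. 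Since $S$ is a closed subset of the compact set $\uds_{1}$, it is compact.

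No essential obstacle remains at this stage. The substantive work has already been carried out in Lemma~\ref{lemma:c3}, which installs the approximation-by-broken-line-segments structure required by the universal differentiability criterion, and in Lemma~\ref{lemma:dimension}, which controls the covering numbers tightly enough to force $\overline{\dim}_{M} = 1$. Theorem~\ref{thm:main} is merely the packaging of these two ingredients through Theorem~\ref{thm:udscrit}.
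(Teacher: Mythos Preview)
Your proposal is correct and follows essentially the same route as the paper: invoke Lemma~\ref{lemma:c3} to feed the family $(\uds_\lambda)$ into Theorem~\ref{thm:udscrit}, and read off the dimension from Lemma~\ref{lemma:dimension} via $S\subseteq\uds_1$. You even make explicit two points the paper leaves implicit, namely the trivial case $d=1$ and the compactness of $S$; for the latter, note that boundedness of $\uds_1$ follows more directly than via your $\sum_k w_k$ remark, since $\uds_1$ is an intersection that already includes a finite union of tubes at a single level (cf.\ \eqref{contained} for $k=1$ or $k=2$), and the relevant growth bound for the full tube system would involve $\sum_k M_k w_{k-1}$ rather than $\sum_k w_k$.
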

\begin{proof}
From Lemma~\ref{lemma:c3}, we have that the family of closed sets $(\uds_{\lambda})$, ${\lambda\in[0,1]}$ satisfy the conditions of Theorem~\ref{thm:udscrit}. Therefore, by Theorem~\ref{thm:udscrit}, the set
\begin{equation*} 
S=\overline{\bigcup_{q<1}\uds_{q}}
\end{equation*}
is a universal differentiability set. Moreover, Theorem~\ref{thm:udscrit} asserts that whenever $g$ is a Lipschitz function on $\mathbb{R}^{d}$, the set of points $x\in{S}$ such that $g$ is differentiable at $x$ is a dense subset of $S$. All that remains is to show that $S$ has upper Minkowski dimension one. This follows from the observation that $\uds_{1/2}\subseteq{S}\subseteq{\uds_{1}}$, together with Lemma~\ref{lemma:dimension}.
\end{proof}
 
\bibliographystyle{plain}
\bibliography{DymondMaleva-biblio}

\begin{thebibliography}{10}

\bibitem{benyaminilindenstrauss00}
Y.~Benyamini and J.~Lindenstrauss.
\newblock {\em {Geometric Nonlinear Functional Analysis}}, volume~48.
\newblock American Mathematical Society, 2000.

\bibitem{doremaleva1}
M.~Dor{\'e} and O.~Maleva.
\newblock {A compact null set containing a differentiability point of every
  Lipschitz function}.
\newblock {\em Mathematische Annalen}, 351(3):633--663, 2009.

\bibitem{doremaleva2}
M.~Dor{\'e} and O.~Maleva.
\newblock {A compact universal differentiability set with Hausdorff dimension
  one}.
\newblock {\em Israel Journal of Mathematics}, pages 1--12, 2010.

\bibitem{doremaleva3}
M.~Dor{\'e} and O.~Maleva.
\newblock {A universal differentiability set in Banach spaces with separable
  dual}.
\newblock {\em Journal of Functional Analysis}, 261(6):1674--1710, 2011.

\bibitem{falconer03}
K.~Falconer.
\newblock {\em {Fractal Geometry: Mathematical Foundations and Applications}}.
\newblock Wiley, 2003.

\bibitem{lindenstrausstiserpreiss12}
J~Lindenstrauss, D.~Preiss, and J.~Ti\v{s}er.
\newblock {\em {Frechet Differentiability of Lipschitz Functions and Porous
  Sets in Banach Spaces}}.
\newblock Princeton University Press, 2012.

\bibitem{mattila99}
P.~Mattila.
\newblock {\em {Geometry of sets and measures in Euclidean spaces: Fractals and
  Rectifiability}}, volume~44.
\newblock Cambridge University Press, 1999.

\bibitem{preiss90}
D.~Preiss.
\newblock {Differentiability of Lipschitz functions on Banach spaces}.
\newblock {\em Journal of Functional Analysis}, 91(2):312--345, 1990.

\bibitem{zahorski46}
Z.~Zahorski.
\newblock Sur l'ensemble des points de non-derivabilite d'une fonction
  continue.
\newblock {\em Bull. Soc. Math. France}, (74), 1946.

\bibitem{zajicek75}
Z.~Zaj\'{i}\v{c}ek.
\newblock {Sets of $\sigma$-porosity and sets of $\sigma$-porosity $(q)$}.
\newblock {\em \v{C}asopis pro p\v{e}stov\'{a}ni matematiky}, 1976.

\end{thebibliography}

\end{document}